\numberwithin{equation}{section}
\numberwithin{figure}{section}
\theoremstyle{plain}
\newtheorem{thm}{\protect\theoremname}[section]
  \theoremstyle{definition}
  \newtheorem{defn}[thm]{\protect\definitionname}
  \theoremstyle{plain}
  \newtheorem{lem}[thm]{\protect\lemmaname}
  \theoremstyle{remark}
  \newtheorem*{rem*}{\protect\remarkname}
  \theoremstyle{plain}
  \newtheorem{prop}[thm]{\protect\propositionname}
  \theoremstyle{remark}
  \newtheorem{rem}[thm]{\protect\remarkname}
  \theoremstyle{plain}
  \newtheorem{cor}[thm]{\protect\corollaryname}
\newcommand{\Addresses}{{
  \bigskip
  \footnotesize
  \textsc{Jarkko Siltakoski,  Department of Mathematics and Statistics, P.O.Box 35, FIN-40014, University of Jyväskylä, Finland}\par\nopagebreak
  \textit{E-mail address}: \href{mailto:jarkko.j.m.siltakoski@student.jyu.fi}{jarkko.j.m.siltakoski@student.jyu.fi}
}}
  \providecommand{\corollaryname}{Corollary}
  \providecommand{\definitionname}{Definition}
  \providecommand{\lemmaname}{Lemma}
  \providecommand{\propositionname}{Proposition}
  \providecommand{\remarkname}{Remark}
\providecommand{\theoremname}{Theorem}
\begin{document}
\global\long\def\d{\,d}
\global\long\def\tr{\mathrm{tr}}
\global\long\def\supp{\mathrm{supp}\thinspace}
\global\long\def\div{\text{div}}
\renewcommand{\labelenumi}{(\roman{enumi})} 

\title[Equivalence of viscosity and weak solutions]{Equivalence of viscosity and weak solutions for the normalized $p(x)$-Laplacian}

\author{Jarkko Siltakoski}
\begin{abstract}
We show that viscosity solutions to the normalized $p(x)$-Laplace
equation coincide with distributional weak solutions to the strong
$p(x)$-Laplace equation when $p$ is Lipschitz and $\inf p>1$. This
yields $C^{1,\alpha}$ regularity for the viscosity solutions of the
normalized $p(x)$-Laplace equation. As an additional application,
we prove a Radó-type removability theorem.
\end{abstract}

\maketitle

\section{Introduction}

In this paper, we study viscosity solutions to the \textit{normalized
$p(x)$-Laplace equation} which is defined by
\begin{equation}
-\Delta_{p(x)}^{N}u:=-\Delta u-\frac{p(x)-2}{\left|Du\right|^{2}}\Delta_{\infty}u:=-\Delta u-\frac{p(x)-2}{\left|Du\right|^{2}}\left\langle D^{2}uDu,Du\right\rangle =0.\label{eq:normalizedpx}
\end{equation}
There has been recent interest in normalized equations, see for example
\cite{jinsilvestre17,imbertJinSilvestre16,banerjeeGarofalo15}. We
are partly motivated by the connection to stochastic tug-of-war games
\cite{peresSchrammSheffieldWilson09} as the case of space dependent
probabilities leads to (\ref{eq:normalizedpx}) \cite{gamestuff}.

The objective of this work is to show that viscosity solutions to
(\ref{eq:normalizedpx}) coincide with solutions of its counterpart
in the theory of distributional weak solutions. One approach to this
kind of equivalence results \cite{equivalence_plaplace,Ishii95} is
based on the uniqueness of solutions. However, it seems difficult
to use uniqueness in our case because the uniqueness of solutions
is an open problem for the equation (\ref{eq:normalizedpx}) as pointed
out in \cite{equivalence_p(x)Laplace}. The equation (\ref{eq:normalizedpx})
is in the non-divergence form. In order to find the weak counterpart,
we note that for $u\in C^{2}(\Omega)$ with non-vanishing gradient
it holds that
\begin{align*}
-\left|Du\right|^{p(x)-2}\Delta_{p(x)}^{N}u= & \mathrm{-div}\left(\left|Du\right|^{p(x)-2}Du\right)+\left|Du\right|^{p(x)-2}\log\left(\left|Du\right|\right)Du\cdot Dp.
\end{align*}
Thus the weak counterpart of (\ref{eq:normalizedpx}) should be the
\textit{strong $p(x)$-Laplace equation}
\begin{equation}
-\Delta_{p(x)}^{S}u:=-\mathrm{div}(\left|Du\right|^{p(x)-2}Du)+\left|Du\right|^{p(x)-2}\log\left|Du\right|Du\cdot Dp=0.\label{eq:strongpx}
\end{equation}

Our main result, Theorem \ref{thm:equivalence}, is that viscosity
solutions to (\ref{eq:normalizedpx}) coincide with weak solutions
to (\ref{eq:strongpx}) when the function $p$ is Lipschitz with $\inf p>1$.
With these assumptions weak solutions to (\ref{eq:strongpx}) in a
domain are locally $C^{1,\alpha}$ continuous \cite{strongpx_regularity}.
Thus our equivalence result yields local $C^{1,\alpha}$ regularity
also for viscosity solutions to (\ref{eq:normalizedpx}). As an application,
we prove a Radó-type removability theorem for the strong $p(x)$-Laplacian.
The theorem follows from the equivalence result since in the definition
of a viscosity solution we may ignore the test functions whose gradient
vanishes.

That viscosity solutions to (\ref{eq:normalizedpx}) are weak solutions
to (\ref{eq:strongpx}) is proven by applying the method of \cite{newequivalence}.
The idea is to approximate a viscosity solution through a sequence
of inf-convolutions, show that the inf-convolutions are essentially
weak supersolutions, and then pass to the limit.

First, in Lemma \ref{lem:ishiiq} we show that the inf-convolution
$u_{\varepsilon}$ of a viscosity supersolution $u$ to (\ref{eq:normalizedpx})
is still, in essence, a viscosity supersolution up to some error.
This fact is a key part of our proof. If there was no $x$-dependence
in (\ref{eq:normalizedpx}), it would be straightforward to see that
the inf-convolution of a viscosity supersolution is still a viscosity
supersolution. This is because a test function that touches the inf-convolution
from below also touches the original function from below at a nearby
point once we add some constant to it. From this it would follow that
the inf-convolution is a supersolution to the original equation. However,
the equation (\ref{eq:normalizedpx}) has $x$-dependence caused by
$p(x)$. Thus the inf-convolution no longer satisfies the original
equation.

In Lemma \ref{lem:proof_lemma} we use the standard mollification
on $u_{\varepsilon}$ and $p$ to deduce from Lemma \ref{lem:ishiiq}
that $u_{\varepsilon}$ is ``almost'' a weak solution to $-\Delta_{p(x)}^{S}u_{\varepsilon}\geq0$.
Applying Caccioppoli type estimates and vector inequalities we are
then able to deduce that the sequence of inf-convolutions converges
to the viscosity supersolution in $W_{loc}^{1,p(\cdot)}(\Omega)$
as $\varepsilon\rightarrow0$. This allows us to pass to the limit
and conclude that $u$ satisfies $-\Delta_{p(x)}^{S}u\geq0$ in the
weak sense.

Due to the variable exponent, the operator $\Delta_{p(x)}^{S}$ can
be singular in some subsets and degenerate in others. Therefore we
apply different arguments in the cases $p(x)<2$ and $p(x)\geq2$,
and finally need to be able to combine them.

The equivalence of weak and viscosity solutions to the usual $p$-Laplace
equation was first proven by Juutinen, Lindqvist and Manfredi \cite{equivalence_plaplace}.
Later Julin and Juutinen \cite{newequivalence} presented a more direct
way to show that viscosity solutions to $-\Delta_{p}u=f$ are also
weak solutions. This proof was adapted in \cite{OptimalC1} to show
that viscosity solutions to $-\Delta_{p}^{N}u=f$ coincide with weak
solutions to $-\Delta_{p}u=\left|Du\right|^{p-2}f$ when $p\geq2$.
Similar arguments were also used in \cite{chilepaper} to study the
equivalence of solutions to $-\Delta_{p}u=f(x,u,Du)$. The variable
exponent case was explored in \cite{equivalence_p(x)Laplace} where
the equivalence of weak and viscosity solutions was proven for the
$p(x)$-Laplace equation using techniques of \cite{equivalence_plaplace}.

The equation (\ref{eq:strongpx}) was introduced by Adamowicz and
Hästö \cite{intr_strongpx_1,intr_strongpx_2} in connection with mappings
of finite distortion. It has been further studied for example in \cite{strongpx_regularity,somepxpaper}. 

The paper is organized as follows: in Section 2 we recall the variable
exponent Lebesgue and Sobolev spaces. Section 3 contains the rigorous
definitions of solutions to equations (\ref{eq:normalizedpx}) and
(\ref{eq:strongpx}). In Section 4 we show that weak solutions of
(\ref{eq:normalizedpx}) are viscosity solutions to (\ref{eq:strongpx})
and the converse statement is proven in Section 5. Finally, in Section
6 we formulate and prove a Radó-type removability theorem for weak
solutions of (\ref{eq:strongpx}).

\section{Variable exponent lebesgue and sobolev spaces}

We briefly recall basic facts about these spaces. For general reference
see e.g. \cite{pxbook}. Let $\Omega\subset\mathbb{R}^{N}$ be an
open and bounded set and let $p:\Omega\rightarrow(1,\infty$) be a
measurable function. We denote

\[
p^{+}:=\underset{x\in\Omega}{\mathrm{ess\,sup}}\,p(x)\ \ \text{and}\ \ \underset{x\in\Omega}{p^{-}:=\mathrm{ess\,inf}\,p(x).}
\]
The \textit{variable exponent Lebesgue space} $L^{p(\cdot)}(\Omega)$
is defined as the set of measurable functions $u:\Omega\rightarrow\mathbb{R}$
for which the $p(\cdot)$-modular
\[
\varrho_{p(\cdot)}(u):=\int_{\Omega}\left|u\right|^{p(x)}\d x
\]
is finite. It is a Banach space equipped with the Luxemburg norm
\[
\left\Vert u\right\Vert _{L^{p(\cdot)}(\Omega)}:=\inf\left\{ \lambda>0:\int_{\Omega}\left|\frac{u}{\lambda}\right|^{p(x)}\d x\leq1\right\} .
\]
Given that $p^{+}<\infty$ or $\varrho_{p(\cdot)}(u)>0$, the norm
and the modular satisfy the inequality (see \cite[p75]{pxbook})
\begin{align}
\min\left\{ \varrho_{p(\cdot)}(u)^{\frac{1}{p^{-}}},\varrho_{p(\cdot)}(u)^{\frac{1}{p^{+}}}\right\}  & \leq\left\Vert u\right\Vert _{L^{p(\cdot)}(\Omega)}\leq\max\left\{ \varrho_{p(\cdot)}(u)^{\frac{1}{p^{-}}},\varrho_{p(\cdot)}(u)^{\frac{1}{p^{+}}}\right\} .\label{eq:modular_ineq}
\end{align}
A version of Hölder's inequality holds \cite[p81]{pxbook} : if $u\in L^{p(\cdot)}(\Omega)$
and $v\in L^{p^{\prime}(\cdot)}(\Omega)$, where $\frac{1}{p(x)}+\frac{1}{p^{\prime}(x)}=1$
for a.e. $x\in\Omega$, then 
\[
\int_{\Omega}\left|u\right|\left|v\right|\d x\leq2\left\Vert u\right\Vert _{L^{p(\cdot)}(\Omega)}\left\Vert v\right\Vert _{L^{p^{\prime}(\cdot)}(\Omega)}.
\]
As a consequence of the Hölder's inequality we have that 
\[
\left\Vert u\right\Vert _{L^{q(\cdot)}(\Omega)}\leq2\left(1+\left|\Omega\right|\right)\left\Vert u\right\Vert _{L^{p(\cdot)}(\Omega)}
\]
for all $u\in L^{p(\cdot)}(\Omega)$ if $q(x)\leq p(x)$ for a.e.
$x\in\Omega$. 

If $1<p^{-}\leq p^{+}<\infty$, then $L^{p(\cdot)}(\Omega)$ is reflexive
and the dual of $L^{p(\cdot)}(\Omega)$ is $L^{p^{\prime}(\cdot)}(\Omega)$. 

The \textit{variable exponent Sobolev space $W^{1,p(\cdot)}(\Omega)$}
is the set of functions in $u\in L^{p(\cdot)}(\Omega)$ for which
the weak gradient $Du$ belongs in $L^{p(\cdot)}(\Omega)$. It is
a Banach space equipped with the norm
\[
\left\Vert u\right\Vert _{W^{1,p(\cdot)}(\Omega)}:=\left\Vert u\right\Vert _{L^{p(\cdot)}(\Omega)}+\left\Vert Du\right\Vert _{L^{p(\cdot)}(\Omega)}.
\]

The space $W_{0}^{1,p}(\Omega)$ is the closure of compactly supported
Sobolev functions in the space $W^{1,p(\cdot)}(\Omega)$. A function
belongs to the the local Lebesgue space $L_{loc}^{p(\cdot)}(\Omega)$
if it belongs to $L^{p(\cdot)}(\Omega^{\prime})$ for all $\Omega^{\prime}\Subset\Omega$.
The local Sobolev space $W_{loc}^{1,p(\cdot)}(\Omega)$ is defined
analogically.

\section{The strong and normalized $p(x)$-Laplace equations}

In this section, we define weak solutions to the strong $p(x)$-Laplace
equation and viscosity solutions to the normalized $p(x)$-Laplace
equation.

From now on we assume that $p$ is Lipschitz continuous and $p^{-}>1$.
\begin{defn}
A function $u\in W_{loc}^{1,p(\cdot)}(\Omega)$ is a \textit{weak
supersolution} to $-\Delta_{p(x)}^{S}u\geq0$ in $\Omega$ if 
\[
\int_{\Omega}\left|Du\right|^{p(x)-2}Du\cdot D\varphi+\left|Du\right|^{p(x)-2}\log\left(\left|Du\right|\right)Du\cdot Dp\,\varphi\d x\geq0
\]
for all non-negative $\varphi\in W^{1,p(\cdot)}(\Omega)$ with compact
support. We say that $u$ is a \textit{weak subsolution} to $-\Delta_{p(x)}^{S}u\leq0$
if $-u$ is a supersolution and that $u$ is a \textit{weak solution}
to $-\Delta_{p(x)}^{S}u=0$ if $u$ is both supersolution and subsolution.
\end{defn}
\begin{lem}
\label{lem:cinfty_testfunctions}It is enough to consider $C_{0}^{\infty}(\Omega)$
test functions in the previous definition.
\end{lem}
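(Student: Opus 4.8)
The plan is to show that the class of $C_0^\infty(\Omega)$ test functions is dense in the relevant sense inside the class of non-negative compactly supported $W^{1,p(\cdot)}(\Omega)$ functions, so that the defining inequality extends from the former to the latter by continuity of the two terms in the integral. First I would fix a non-negative $\varphi\in W^{1,p(\cdot)}(\Omega)$ with compact support $K=\supp\varphi\Subset\Omega$. Choose an open set $\Omega'$ with $K\Subset\Omega'\Subset\Omega$ and a cutoff $\eta\in C_0^\infty(\Omega)$ with $\eta\equiv1$ on a neighborhood of $K$ and $\supp\eta\subset\Omega'$. Mollify: set $\varphi_\delta:=\eta\,(\varphi*\rho_\delta)$, where $\rho_\delta$ is a standard mollifier; for $\delta$ small the support of $\varphi*\rho_\delta$ stays inside $\Omega'$ so the cutoff only matters to guarantee compact support in $\Omega$, and $\varphi_\delta\in C_0^\infty(\Omega)$ with $\varphi_\delta\geq0$ since mollification preserves non-negativity. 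By standard properties of mollification in variable exponent spaces (recall that $C^\infty$ is dense in $W^{1,p(\cdot)}$ because $p$ is Lipschitz, hence log-H\"older continuous), we have $\varphi_\delta\to\varphi$ in $W^{1,p(\cdot)}(\Omega)$ as $\delta\to0$, and all the $\varphi_\delta$ are supported in the fixed compact set $\overline{\Omega'}$.

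Next I would pass to the limit in the inequality
\[
\int_{\Omega}\left|Du\right|^{p(x)-2}Du\cdot D\varphi_\delta+\left|Du\right|^{p(x)-2}\log\left(\left|Du\right|\right)Du\cdot Dp\,\varphi_\delta\d x\geq0,
\]
which holds for every $\delta$ by hypothesis. For the first term, $\left|Du\right|^{p(x)-2}Du\in L^{p'(\cdot)}(\Omega')$ since $u\in W^{1,p(\cdot)}_{loc}(\Omega)$, and $D\varphi_\delta\to D\varphi$ in $L^{p(\cdot)}(\Omega')$, so the generalized H\"older inequality gives convergence of the first integral. For the second term, the coefficient $\left|Du\right|^{p(x)-2}\log\left(\left|Du\right|\right)Du\cdot Dp$ needs to be shown to lie in $L^1_{loc}(\Omega)$ — here I would use that $Dp\in L^\infty$ (as $p$ is Lipschitz) and that $\left|Du\right|^{p(x)-1}\bigl|\log\left|Du\right|\bigr|$ is locally integrable, which follows from $\left|Du\right|^{p(x)}\in L^1_{loc}$ together with the elementary bound $t^{p(x)-1}|\log t|\leq C(t^{p(x)}+1)$ valid uniformly for $t\geq0$ once $p^->1$ (absorbing the logarithm into a power). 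Since $\varphi_\delta\to\varphi$ uniformly (mollification of a continuous... — but $\varphi$ need not be continuous), more safely $\varphi_\delta\to\varphi$ in $L^{p(\cdot)}(\Omega')$ and the coefficient is in $L^{p'(\cdot)}(\Omega')$, so again H\"older yields convergence of the second integral. Hence the inequality passes to the limit and holds for $\varphi$.

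The main obstacle I anticipate is the integrability of the lower-order term, i.e. verifying that $\left|Du\right|^{p(x)-2}\log\left(\left|Du\right|\right)Du$ belongs to $L^{p'(\cdot)}_{loc}(\Omega)$ (or at least that the product with $\varphi$ is in $L^1_{loc}$), because of the interplay between the logarithm, the variable exponent, and the behavior of $|Du|$ near $0$ and near $\infty$. The resolution is the uniform elementary inequality: for any $\epsilon>0$ there is $C_\epsilon$ with $\bigl|\log t\bigr|\,t^{s}\leq C_\epsilon(t^{s-\epsilon}+t^{s+\epsilon})$ for all $t>0$, applied with $s=p(x)-1$ and $\epsilon$ chosen small enough that $p(x)-1-\epsilon>0$ uniformly (possible since $p^->1$); combined with $Dp\in L^\infty$ and $|Du|^{p(\cdot)}\in L^1_{loc}$, this gives the needed local integrability and lets the dominated convergence / H\"older argument go through. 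Once this is in place the lemma follows, and it is precisely this reduction to smooth test functions that will later permit us, in the removability theorem, to discard test functions with vanishing gradient.
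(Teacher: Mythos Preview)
Your overall strategy---approximate a non-negative compactly supported $\varphi\in W^{1,p(\cdot)}(\Omega)$ by non-negative $C_0^\infty$ functions in $W^{1,p(\cdot)}$ and pass to the limit term by term---is the same as the paper's, and your treatment of the divergence term via H\"older's inequality is correct. The gap is in the logarithmic lower-order term.

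You assert that the coefficient $|Du|^{p(x)-2}\log|Du|\,Du$ lies in $L^{p'(\cdot)}_{loc}$, so that pairing with $\varphi_\delta\to\varphi$ in $L^{p(\cdot)}$ finishes the argument. This is false in general: raised to the power $p'(x)$ the coefficient becomes $|Du|^{p(x)}\bigl|\log|Du|\bigr|^{p'(x)}$, and since you only know $|Du|^{p(\cdot)}\in L^1_{loc}$ there is no room to absorb the logarithmic factor. Your $\epsilon$-inequality $t^{p(x)-1}|\log t|\le C_\epsilon\bigl(t^{p(x)-1-\epsilon}+t^{p(x)-1+\epsilon}\bigr)$ does not rescue this, because $|Du|^{p(x)-1+\epsilon}$ is still not in $L^{p'(\cdot)}_{loc}$ (its $p'(x)$-th power is $|Du|^{p(x)+\epsilon p'(x)}$). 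Your earlier correct bound $t^{p(x)-1}|\log t|\le C(t^{p(x)}+1)$ gives only $L^1_{loc}$ for the coefficient, which would require $\varphi_\delta\to\varphi$ in $L^\infty$; as you yourself note, $\varphi$ need not be continuous, so this route is blocked too.

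The paper closes this gap not by improving the integrability of the coefficient but by upgrading the convergence of the test functions via a Sobolev embedding. From $a^s\log a\le N a^{s+1/N}+1/s$ one obtains $|Du|^{p(x)-1}\bigl|\log|Du|\bigr|\in L^{q(\cdot)}_{loc}$ with $q(x)=\frac{p(x)}{p(x)-1+1/N}$. The conjugate exponent is $q'(x)=\frac{Np(x)}{N-1}$, which satisfies $q'(x)\le p^*(x)$, so the variable-exponent Sobolev inequality yields $\|\varphi_\delta-\varphi\|_{L^{q'(\cdot)}}\le C\|D\varphi_\delta-D\varphi\|_{L^{p(\cdot)}}\to 0$, and H\"older's inequality with the pair $(q(\cdot),q'(\cdot))$ then gives convergence of the second integral. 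In short, the missing ingredient in your argument is the Sobolev step that lifts the test-function convergence from $L^{p(\cdot)}$ to the slightly stronger $L^{q'(\cdot)}$.
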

\begin{proof}
Assume that $\varphi\in W^{1,p(\cdot)}(\Omega)$ has a compact support
in an open set $\Omega^{\prime}\Subset\Omega$. Since $p$ is log-Hölder
continuous and bounded as a Lipschitz function, there is a sequence
of functions $\varphi_{j}\in C_{0}^{\infty}(\Omega^{\prime})$ such
that $\varphi_{j}\rightarrow\varphi$ in $W^{1,p(\cdot)}(\Omega^{\prime})$
(see \cite[p347]{pxbook}). We set $\psi_{j}:=\varphi-\varphi_{j}$.
Then it is enough to show that
\[
\int_{\Omega^{\prime}}\left|Du\right|^{p(x)-2}Du\cdot D\psi_{j}\d x+\int_{\Omega^{\prime}}\left|Du\right|^{p(x)-2}\log\left(\left|Du\right|\right)Du\cdot Dp\,\psi_{j}\d x\rightarrow0
\]
as $j\rightarrow\infty$. The first integral convergences to zero
by Hölder's inequality so we focus on the second integral. We may
assume that $N>1$. We set $q(x):=\frac{p(x)}{p(x)-1+\frac{1}{N}}$.
Using the inequality $a^{s}\log a\leq Na^{s+\frac{1}{N}}+\frac{1}{s}$
for $a,s>0$ we get
\begin{align*}
\int_{\Omega^{\prime}} & \left|Du\right|^{p(x)-1}\left|\log\left|Du\right|\right|\left|Dp\right|\left|\psi_{j}\right|\d x\\
 & \leq\left\Vert Dp\right\Vert _{L^{\infty}(\Omega^{\prime})}\left(\int_{\Omega^{\prime}}\frac{\left|\psi_{j}\right|}{p(x)-1}\d x+N\int_{\Omega^{\prime}}\left|Du\right|^{p(x)-1+\frac{1}{N}}\left|\psi_{j}\right|\d x\right)\\
 & \leq C(p,\Omega)\left(\left\Vert \psi_{j}\right\Vert _{L^{p(\cdot)}(\Omega^{\prime})}+\left\Vert \left|Du\right|^{p(x)-1+\frac{1}{N}}\right\Vert _{L^{q(\cdot)}(\Omega^{\prime})}\left\Vert \psi_{j}\right\Vert _{L^{q^{\prime}(\cdot)}(\Omega^{\prime})}\right).
\end{align*}
We take $r\in(1,N)$ such that $q^{\prime+}\leq r^{\ast}:=\frac{Nr}{N-r}$.
Then we have $q^{\prime}(x)=\frac{Np(x)}{N-1}\leq\min(p^{\ast}(x),r^{\ast})$,
where $p^{\ast}(x):=\frac{Np(x)}{N-p(x)}$. Therefore
\[
\left\Vert \psi_{j}\right\Vert _{L^{q^{\prime}(\cdot)}(\Omega^{\prime})}\leq2\left(1+\left|\Omega\right|\right)\left\Vert \psi_{j}\right\Vert _{L^{\min(p^{\ast}(\cdot),r^{\ast})}(\Omega^{\prime})}.
\]
Since $\psi_{j}\in W_{0}^{1,\min(p(\cdot),r)}(\Omega^{\prime})$ ,
we have by a variable exponent version of the Sobolev inequality (see
e.g. \cite[p265]{pxbook})
\[
\left\Vert \psi_{j}\right\Vert _{L^{\min(p^{\ast}(\cdot),r^{\ast})}(\Omega^{\prime})}\leq C\left\Vert D\psi_{j}\right\Vert _{L^{\min(p(\cdot),r)}(\Omega^{\prime})}\leq2C(1+\left|\Omega\right|)\left\Vert D\psi_{j}\right\Vert _{L^{p(\cdot)}(\Omega^{\prime})}.
\]
These estimates imply the claim since $\left\Vert \psi_{j}\right\Vert _{W^{1,p}(\Omega^{\prime})}\rightarrow0$
as $j\rightarrow\infty$.
\end{proof}
In order to define viscosity solutions to $-\Delta_{p(x)}^{N}u=0$,
we set
\[
F(x,\eta,X):=-\left(\tr X+\frac{p(x)-2}{\left|\eta\right|^{2}}\left\langle X\eta,\eta\right\rangle \right)
\]
for all $(x,\eta,X)\in\Omega\times\left(\mathbb{R}^{N}\setminus\left\{ 0\right\} \right)\times S^{N}$
where $S^{N}$ is the set of symmetric $N\times N$ matrices. We also
recall the concept of semi-jets. The \textit{subjet of a function
$u:\Omega\rightarrow\mathbb{R}$ at} $x$ is defined by setting $(\eta,X)\in J^{2,-}u(x)$
if
\begin{equation}
u(y)\geq u(x)+\eta\cdot(y-x)+\frac{1}{2}\left\langle X(y-x),(y-x)\right\rangle +o(\left|y-x\right|^{2})\text{ as }y\rightarrow x.\label{eq:jet_ineq}
\end{equation}
The \textit{closure of a subjet} is defined by setting $(\eta,X)\in\overline{J}^{2,-}u(x)$
if there is a sequence $(\eta_{i},X_{i})\in J^{2,-}u(x_{i})$ such
that $(x_{i},\eta_{i},X_{i})\rightarrow(x,\eta,X)$. The \textit{superjet}
$J^{2,+}u(x)$ and its closure $\overline{J}^{2,+}u(x)$ are defined
in the same manner except that the inequality (\ref{eq:jet_ineq})
is reversed.
\begin{defn}
\label{def:viscositysuper}A lower semicontinuous function $u:\Omega\rightarrow\mathbb{R}$
is a \textit{viscosity supersolution} to $-\Delta_{p(x)}^{N}u\geq0$
in $\Omega$ if, whenever $(\eta,X)\in J^{2,-}u(x)$ with $x\in\Omega$
and $\eta\not=0$, then
\[
F(x,\eta,X)\geq0.
\]
A function $u$ is a \textit{viscosity subsolution} to $-\Delta_{p(x)}^{N}u\leq0$
if $-u$ is a viscosity supersolution, and a \textit{viscosity solution}
to $-\Delta_{p(x)}^{N}u=0$ if it is both viscosity super- and subsolution.
\end{defn}
\begin{rem*}
Observe that in the previous definition we require nothing in the
case $(0,X)\in J^{2,-}u(x)$.
\end{rem*}
Viscosity solutions may be equivalently defined using the jet-closures
or test functions. For the next proposition, see e.g. \cite[Prop 2.6]{koike}.
\begin{prop}
Let $u:\Omega\rightarrow\mathbb{R}$ be lower semicontinuous. Then
the following conditions are equivalent.
\begin{enumerate}
\item The function u is a viscosity supersolution to $-\Delta_{p(x)}^{N}u\geq0$
in $\Omega$.
\item Whenever $(\eta,X)\in\overline{J}^{2,-}u(x)$ with $x\in\Omega$,
$\eta\not=0$, we have $F(x,\eta,X)\geq0$.
\item Whenever $\varphi\in C^{2}(\Omega)$ is such that $\varphi(x)=u(x)$,
$D\varphi(x)\not=0$ and $\varphi(y)<u(y)$ for all $y\not=x$, it
holds $F(x,D\varphi(x),D^{2}\varphi(x))\geq0$.
\end{enumerate}
\end{prop}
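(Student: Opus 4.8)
The plan is to prove the chain (ii)$\,\Rightarrow\,$(i)$\,\Rightarrow\,$(iii)$\,\Rightarrow\,$(ii). The first implication is immediate, since a subjet is realized by a constant sequence, so $J^{2,-}u(x)\subseteq\overline{J}^{2,-}u(x)$ and (ii) applied to a jet $(\eta,X)\in J^{2,-}u(x)$ with $\eta\neq0$ gives $F(x,\eta,X)\geq0$. For (i)$\,\Rightarrow\,$(iii), suppose $\varphi\in C^{2}(\Omega)$ satisfies $\varphi(x)=u(x)$, $D\varphi(x)\neq0$, and $\varphi<u$ on $\Omega\setminus\{x\}$; then Taylor's formula for $\varphi$ at $x$ together with $\varphi\leq u$ shows $(D\varphi(x),D^{2}\varphi(x))\in J^{2,-}u(x)$, and since $D\varphi(x)\neq0$ condition (i) yields $F(x,D\varphi(x),D^{2}\varphi(x))\geq0$, which is what (iii) asserts.

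The substantial implication is (iii)$\,\Rightarrow\,$(ii), which I would first reduce to the assertion that $F(x,\eta,X)\geq0$ whenever $(\eta,X)\in J^{2,-}u(x)$ and $\eta\neq0$. Given such a jet, the subjet inequality (\ref{eq:jet_ineq}) furnishes a nondecreasing modulus $\omega$ with $\omega(r)=o(r^{2})$ for which $u(y)\geq u(x)+\eta\cdot(y-x)+\tfrac{1}{2}\langle X(y-x),(y-x)\rangle-\omega(|y-x|)$ near $x$. Choosing $\sigma\in C^{2}$ near $0$ with $\sigma(0)=0$ and $|z|^{2}\sigma(|z|)\geq\omega(|z|)$ for small $|z|$, the function
\[
\varphi(y):=u(x)+\eta\cdot(y-x)+\tfrac{1}{2}\langle X(y-x),(y-x)\rangle-|y-x|^{2}\sigma(|y-x|)-|y-x|^{4}
\]
is $C^{2}$ near $x$ with $\varphi(x)=u(x)$, $D\varphi(x)=\eta\neq0$, $D^{2}\varphi(x)=X$, and $\varphi(y)<u(y)$ for $y\neq x$ in a small ball. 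Using the lower semicontinuity of $u$, which bounds $u$ below on compact subsets of $\Omega$, one extends $\varphi$ --- without changing it near $x$ --- to a function in $C^{2}(\Omega)$ that remains strictly below $u$ on $\Omega\setminus\{x\}$, and then (iii) applies and gives $F(x,\eta,X)\geq0$. Finally, for $(\eta,X)\in\overline{J}^{2,-}u(x)$ with $\eta\neq0$, take $(\eta_{i},X_{i})\in J^{2,-}u(x_{i})$ with $(x_{i},\eta_{i},X_{i})\to(x,\eta,X)$; then $\eta_{i}\neq0$ for large $i$, so $F(x_{i},\eta_{i},X_{i})\geq0$, and since $p$ is continuous the map $F$ is continuous on $\Omega\times(\mathbb{R}^{N}\setminus\{0\})\times S^{N}$, so letting $i\to\infty$ gives $F(x,\eta,X)\geq0$, i.e. (ii).

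The main obstacle is the construction in the second paragraph: building a genuine $C^{2}(\Omega)$ function that touches $u$ strictly from below at $x$ and has the prescribed second-order jet $(\eta,X)$, in particular nonvanishing gradient $\eta$. This amounts to absorbing the $o(|y-x|^{2})$ remainder in the subjet inequality into a smooth radial correction whose $2$-jet at $x$ vanishes, and then upgrading the inequality to a strict one on all of $\Omega\setminus\{x\}$. Everything else reduces to Taylor's theorem and to the continuity of $F$ on the region $\{\eta\neq0\}$, which is precisely the region on which Definition \ref{def:viscositysuper} imposes a condition.
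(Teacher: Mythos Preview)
The paper does not give its own proof of this proposition; it simply refers the reader to \cite[Prop~2.6]{koike}. Your sketch is precisely the standard argument one finds in such references (and in the User's Guide \cite{userguide}): the trivial inclusions $J^{2,-}u(x)\subset\overline{J}^{2,-}u(x)$ and ``test function $\Rightarrow$ subjet'' handle (ii)$\Rightarrow$(i)$\Rightarrow$(iii), and the real content is manufacturing a $C^{2}$ test function with a prescribed $2$-jet from the subjet inequality, followed by a continuity-of-$F$ limit to pass to the closure. So there is nothing to compare against in the paper itself, and your route is the expected one.

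Two small technical remarks on your construction in the (iii)$\Rightarrow$(ii) step. First, the radial correction: asking for $\sigma\in C^{2}$ near $0$ with $\sigma(0)=0$ and $r^{2}\sigma(r)\ge\omega(r)$ is slightly imprecise, because $\omega(r)/r^{2}$ may oscillate and need not be dominated by any $C^{2}$ function vanishing at $0$ without further smoothing. The clean version is to first replace $\omega$ by its nondecreasing envelope and then use the standard double-integration trick to produce $h\in C^{2}([0,\infty))$ with $h(0)=h'(0)=h''(0)=0$ and $h\ge\omega$ near $0$; the function $y\mapsto h(|y-x|)$ is then $C^{2}$ with vanishing $2$-jet at $x$ and does what you want. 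Second, the global extension to $\Omega$ with $\varphi<u$ on all of $\Omega\setminus\{x\}$ relies on $u$ being finite-valued and lower semicontinuous: one takes a compact exhaustion of $\Omega$, uses the lower bounds of $u$ on each piece, and patches with a partition of unity to push $\varphi$ below $u$ away from a fixed neighborhood of $x$. You gesture at this correctly; just be aware that $u$ need not be globally bounded below, so a single constant subtraction does not suffice.
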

When $\varphi$ is as in the third condition above, we say that $\varphi$
\textit{touches $u$ from below at} $x$.

\section{Weak solutions are Viscosity solutions}

We show that if $u$ is a weak solution to $-\Delta_{p(x)}^{S}u=0$,
then it is a viscosity solution to $-\Delta_{p(x)}^{N}u=0$. 

Juutinen, Lukkari and Parviainen \cite{equivalence_p(x)Laplace} showed
that weak solutions to the standard $p(x)$-Laplace equation are also
viscosity solutions. This was accomplished with the help of the comparison
principle. For if $u$ is a weak supersolution to $-\Delta_{p(x)}u\geq0$
that is not a viscosity supersolution, then there is a test function
$\varphi\in C^{2}$ touching $u$ from below at $x$ so that $-\Delta_{p(x)}\varphi<0$
in some ball $B(x)$. Lifting $\varphi$ slightly produces a new function
$\tilde{\varphi}$ still satisfying $-\Delta_{p(x)}\tilde{\varphi}<0$
in $B(x)$ and $\tilde{\varphi}\leq u$ in $\partial B(x)$. Comparison
principle now implies that $\tilde{\varphi}\leq u$ in $B(x)$ which
is a contradiction since $\tilde{\varphi}(x)>\varphi(x)=u(x)$.

Our difficulty is that, to the best of our knowledge, the comparison
principle is an open problem for the strong $p(x)$-Laplacian. Our
strategy is therefore to consider a ball so small that the gradient
of the test function does not vanish. Then the comparison principle
holds and we arrive at a contradiction.
\begin{thm}
\label{thm:visc}If $u\in W_{loc}^{1,p(\cdot)}(\Omega)$ is a weak
solution to $-\Delta_{p(x)}^{S}u=0$, then it is a viscosity solution
to $-\Delta_{p(x)}^{N}u=0$ in $\Omega$. 
\end{thm}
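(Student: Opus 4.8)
The plan is to argue by contradiction and to exploit that on a ball where the relevant gradient does not vanish the equation $-\Delta_{p(x)}^{S}u=0$ is uniformly elliptic, so that a comparison principle becomes available. First I note that both operators are odd: replacing $u$ by $-u$ changes the sign of $F(x,Du,D^{2}u)$ and of $-\Delta_{p(x)}^{S}u$. Hence $-u$ is again a weak solution, and it suffices to prove that a weak supersolution to $-\Delta_{p(x)}^{S}u\geq0$ is a viscosity supersolution to $-\Delta_{p(x)}^{N}u\geq0$; the subsolution statement then follows by applying this to $-u$. So suppose, for contradiction, that $u$ is a weak supersolution that is not a viscosity supersolution. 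By the test-function characterization of viscosity supersolutions there are $x_{0}\in\Omega$ and $\varphi\in C^{2}$ touching $u$ from below at $x_{0}$ with $D\varphi(x_{0})\neq0$ and $F(x_{0},D\varphi(x_{0}),D^{2}\varphi(x_{0}))<0$.

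Since $D\varphi$ and $D^{2}\varphi$ are continuous and $D\varphi(x_{0})\neq0$, I can pick $r>0$ and constants $0<c_{1}<c_{2}$ so small that $\overline{B_{r}(x_{0})}\subset\Omega$, $c_{1}\leq\left|D\varphi\right|\leq c_{2}$ on $\overline{B_{r}(x_{0})}$, and, by uniform continuity, $F(x,D\varphi(x),D^{2}\varphi(x))\leq-\delta<0$ there for some $\delta>0$. Wherever $D\varphi\neq0$ one has the pointwise identity $-\left|D\varphi\right|^{p(x)-2}\Delta_{p(x)}^{N}\varphi=-\Delta_{p(x)}^{S}\varphi$ recorded in the introduction, and $\left|D\varphi\right|^{p(x)-2}$ is bounded between positive constants on $\overline{B_{r}(x_{0})}$; hence $-\Delta_{p(x)}^{S}\varphi\leq-\delta'<0$ pointwise in $B_{r}(x_{0})$ for some $\delta'>0$, and, integrating by parts, $\varphi$ is a strict weak subsolution to $-\Delta_{p(x)}^{S}\varphi\leq0$ in $B_{r}(x_{0})$. (If one wishes, the $C^{1,\alpha}$ regularity of weak solutions may be invoked here to obtain in addition $Du(x_{0})=D\varphi(x_{0})\neq0$, hence that $Du$ is nonvanishing and bounded on $B_{r}(x_{0})$ after shrinking $r$; this is convenient but not essential.)

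Next I lift the test function. Because $u$ is lower semicontinuous, $\varphi$ is continuous, and $\varphi<u$ on $\overline{B_{r}(x_{0})}\setminus\{x_{0}\}$, the quantity $m:=\min_{\partial B_{r}(x_{0})}(u-\varphi)$ is attained and strictly positive, and $\tilde{\varphi}:=\varphi+m/2$ satisfies: $\tilde{\varphi}$ is still a strict weak subsolution in $B_{r}(x_{0})$ (adding a constant does not affect the gradient); $\tilde{\varphi}-u\leq-m/2<0$ on $\partial B_{r}(x_{0})$, so that $(\tilde{\varphi}-u)_{+}\in W_{0}^{1,p(\cdot)}(B_{r}(x_{0}))$; and $\tilde{\varphi}(x_{0})=u(x_{0})+m/2>u(x_{0})$. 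The comparison principle for $-\Delta_{p(x)}^{S}$ on $B_{r}(x_{0})$, applied to the weak subsolution $\tilde{\varphi}$ and the weak supersolution $u$, then yields $\tilde{\varphi}\leq u$ in $B_{r}(x_{0})$, which contradicts $\tilde{\varphi}(x_{0})>u(x_{0})$.

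The heart of the matter is the comparison principle used in the last step, which is open for $-\Delta_{p(x)}^{S}$ in general because of the lower-order term $\left|Du\right|^{p(x)-2}\log\left|Du\right|Du\cdot Dp$, whose growth in the gradient is slightly above the natural rate $\left|Du\right|^{p(x)-1}$. On $B_{r}(x_{0})$, however, $D\tilde{\varphi}$ is pinched between the positive constants $c_{1}$ and $c_{2}$, so near $\tilde{\varphi}$ the operator is uniformly elliptic and the lower-order term is locally Lipschitz in the gradient. I expect comparison to follow by testing the difference of the weak formulations of $u$ and $\tilde{\varphi}$ against $(\tilde{\varphi}-u)_{+}$: the monotonicity of $\xi\mapsto\left|\xi\right|^{p(x)-2}\xi$ controls the principal part (with the usual split between $p(x)\geq2$ and $1<p(x)<2$, both reducing to a genuinely coercive estimate since $\left|D\tilde{\varphi}\right|$ is bounded away from $0$), while the lower-order contribution is absorbed by Young's inequality together with the strict sign $-\Delta_{p(x)}^{S}\tilde{\varphi}\leq-\delta'$ and, if needed, the smallness of $B_{r}(x_{0})$ through Poincar\'e's inequality, splitting $\{\tilde{\varphi}>u\}$ according to the size of $Du$ so as to handle the region where $\left|Du\right|$ is large. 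This comparison step is where I expect the main difficulty to lie.
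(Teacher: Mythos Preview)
Your overall strategy---assume a bad test function, work on a small ball where $D\varphi\neq0$, lift $\varphi$, and compare---is exactly the paper's. The gap is in the comparison step, and the fix is precisely the ingredient you dismiss as ``convenient but not essential'': the $C^{1}$ regularity of the weak \emph{solution} $u$. The paper invokes this at the outset (weak solutions of $-\Delta_{p(x)}^{S}u=0$ are $C^{1}$ by Zhang--Zhou), so that $Du(x_{0})=D\varphi(x_{0})\neq0$ and, by continuity of $Du$, one may shrink $r$ until
\[
\underset{B_{r}(x_{0})}{\mathrm{ess\,sup}}\ \left|Dp\right|\,\Big|\,\left|D\varphi\right|^{p(x)-2}\log\left|D\varphi\right|\,D\varphi-\left|Du\right|^{p(x)-2}\log\left|Du\right|\,Du\,\Big|\ \leq\ \tfrac{1}{2}\,h\,m,
\]
where $h>0$ is the strict sign and $m=\inf_{B_{r}}\left|D\varphi\right|^{p(x)-2}>0$. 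Testing both weak formulations with $(\varphi+l-u)_{+}$ and subtracting, the two logarithmic terms then cancel up to an error dominated by the strict term $-h\int\left|D\varphi\right|^{p(x)-2}\psi$, forcing $\int_{A}(\left|D\varphi\right|^{p(x)-2}D\varphi-\left|Du\right|^{p(x)-2}Du)\cdot(D\varphi-Du)\leq-\tfrac{hm}{2}\int_{A}\psi<0$, a contradiction with monotonicity.

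Your alternative route---absorb the log term by Young's inequality, Poincar\'e, and splitting according to the size of $|Du|$---is not clearly viable and is exactly the open comparison problem you acknowledge. The term $\left|Du\right|^{p(x)-1}\log\left|Du\right|$ grows faster than $\left|Du\right|^{p(x)-1}$; absorbing it into the principal part would require control of $\int_{\{\tilde\varphi>u\}}\left|Du\right|^{p(x)}$, which the monotonicity estimate does not directly provide (it controls $\left|Du-D\tilde\varphi\right|$, and only weakly when $p(x)<2$), and smallness of $r$ alone does not compensate. So promote the $C^{1}$ regularity from a parenthetical to the main device: with $Du\approx D\varphi$ on $B_{r}(x_{0})$ the logarithmic contributions nearly cancel and the argument closes in a few lines, as in the paper.
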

\begin{proof}
Zhang and Zhou \cite{strongpx_regularity} showed that weak solutions
of $-\Delta_{p(x)}^{S}u=0$ are in $C^{1}(\Omega)$. Therefore it
suffices to show that if $u\in C^{1}(\Omega)$ is a weak supersolution
to $-\Delta_{p(x)}^{S}u\geq0$, then it is also a viscosity supersolution
to $-\Delta_{p(x)}^{N}u\geq0$. Assume on the contrary that there
is $\varphi\in C^{2}(\text{\ensuremath{\Omega})}$ touching $u$ from
below at $x_{0}\in\Omega$, $D\varphi(x_{0})\not=0$ and
\[
0>-h>F(x_{0},D\varphi(x_{0}),D^{2}\varphi(x_{0})).
\]
Then by continuity there is $r>0$ such that in $B_{r}(x_{0})$ it
holds
\begin{align}
-h\left|D\varphi\right|^{p(x)-2}\geq & -\left|D\varphi\right|^{p(x)-2}\left(\Delta\varphi+\frac{p(x)-2}{\left|D\varphi\right|^{2}}\Delta_{\infty}\varphi\right).\label{eq:visc_ineq1}
\end{align}
Since $Du(x_{0})=D\varphi(x_{0})\not=0$, we may also assume that
there is $m>0$ such that
\begin{equation}
\inf_{x\in B_{r}(x_{0})}\left|D\varphi\right|^{p(x)-2}\geq m\label{eq:visc_ineq22}
\end{equation}
and
\begin{equation}
\underset{x\in B_{r}(x_{0})}{\mathrm{ess\,sup}}\left|Dp\right|\left|\left|D\varphi\right|^{p(x)-2}\log\left(\left|D\varphi\right|\right)D\varphi-\left|Du\right|^{p(x)-2}\log\left(\left|Du\right|\right)Du\right|\leq\frac{hm}{2}.\label{eq:visc_ineq33}
\end{equation}
Let $l:=\min_{x\in\partial B_{r}(x_{0})}\left(u-\varphi\right)>0$
and set $\psi(x):=\max\left(\varphi(x)+l-u(x),0\right).$ Then $\psi\in W_{0}^{1,2}(B_{r}(x_{0}))$
so there are $\psi_{j}\in C_{0}^{\infty}(B_{r}(x_{0}))$ such that
$\psi_{j}\rightarrow\psi$ in $W^{1,2}(B_{r}(x_{0}))$. Let $p_{j}$
be the standard mollification of $p$. Multiplying (\ref{eq:visc_ineq1})
by $\psi$ and integrating over $B_{r}(x_{0})$ yields
\begin{align}
-h\int_{B_{r}(x_{0})} & \left|D\varphi\right|^{p(x)-2}\psi\d x\nonumber \\
\geq & \int_{B_{r}(x_{0})}-\left|D\varphi\right|^{p(x)-2}\left(\Delta\varphi+\frac{p(x)-2}{\left|D\varphi\right|^{2}}\Delta_{\infty}\varphi\right)\psi\d x\nonumber \\
= & \lim_{j\rightarrow\infty}\int_{B_{r}(x_{0})}-\left|D\varphi\right|^{p_{j}(x)-2}\left(\Delta\varphi+\frac{p_{j}(x)-2}{\left|D\varphi\right|^{2}}\Delta_{\infty}\varphi\right)\psi_{j}\d x,\label{eq:visc_ineq11}
\end{align}
where the last equality holds because $\psi_{j}\rightarrow\psi$ in
$W^{1,2}(B_{r}(x_{0}))$ and $p_{j}\rightarrow p$ uniformly in $B_{r}(x_{0})$.
Calculating the divergence of $\left|D\varphi\right|^{p_{j}(x)-2}D\varphi$
and integrating by parts we get
\begin{align}
\int_{B_{r}(x_{0})} & -\left|D\varphi\right|^{p_{j}(x)-2}\left(\Delta\varphi+\frac{p_{j}(x)-2}{\left|D\varphi\right|^{2}}\Delta_{\infty}\varphi\right)\psi_{j}\d x\nonumber \\
= & \int_{B_{r}(x_{0})}-\mathrm{div}\left(\left|D\varphi\right|^{p_{j}(x)-2}D\varphi\right)\psi_{j}+\left|D\varphi\right|^{p_{j}(x)-2}\log\left(\left|D\varphi\right|\right)D\varphi\cdot Dp_{j}\,\psi_{j}\d x\nonumber \\
= & \int_{B_{r}(x_{0})}\left|D\varphi\right|^{p_{j}(x)-2}D\varphi\cdot\left(D\psi_{j}+\log\left(\left|D\varphi\right|\right)Dp_{j}\,\psi_{j}\right)\d x.\label{eq:visc_eq22}
\end{align}
By the convergence of $\psi_{j}$ and $p_{j}$, it follows from (\ref{eq:visc_ineq11})
and (\ref{eq:visc_eq22}) that
\begin{align}
- & h\int_{B_{r}(x_{0})}\left|D\varphi\right|^{p(x)-2}\psi\d x\geq\int_{B_{r}(x_{0})}\left|D\varphi\right|^{p(x)-2}D\varphi\cdot\left(D\psi+\log\left(\left|D\varphi\right|\right)Dp\,\psi\right)\d x.\label{eq:visc_ineq2}
\end{align}
Since $u$ is a weak supersolution to $\Delta_{p(x)}^{S}u=0$ and
$\psi\in W^{1,p(\cdot)}(\Omega)$ has a compact support in $\Omega$,
we have 
\begin{equation}
\int_{B_{r}(x_{0})}\left|Du\right|^{p(x)-2}Du\cdot\left(D\psi+\log\left|Du\right|Dp\,\psi\right)\d x\geq0.\label{eq:visc_eq3}
\end{equation}
Denoting $A:=\left\{ x\in B_{r}(x_{0}):\psi(x)>0\right\} $ and combining
(\ref{eq:visc_ineq2}) and (\ref{eq:visc_eq3}) we arrive at
\begin{align}
\int_{A} & \left(\left|D\varphi\right|^{p(x)-2}D\varphi-\left|Du\right|^{p(x)-2}Du\right)\cdot\left(D\varphi-Du\right)\d x\nonumber \\
\leq & \int_{A}\left|\left|Du\right|^{p(x)-2}\log\left(\left|Du\right|\right)Du-\left|D\varphi\right|^{p(x)-2}\log\left(\left|D\varphi\right|\right)D\varphi\right|\left|Dp\right|\psi\d x\nonumber \\
 & -h\int_{A}\left|D\varphi\right|^{p(x)-2}\psi\d x\nonumber \\
\leq & -\frac{hm}{2}\int_{A}\psi\d x,\label{eq:visc_ineq4}
\end{align}
where the last inequality follows from (\ref{eq:visc_ineq22}) and
(\ref{eq:visc_ineq33}). Since 
\[
\left(\left|a\right|^{p(x)-2}a-\left|b\right|^{p(x)-2}b\right)\cdot\left(a-b\right)\geq0
\]
for any two vectors $a,b\in\mathbb{R}^{N}$ when $p(x)>1$, it follows
from (\ref{eq:visc_ineq4}) that $\left|A\right|=0$. But this is
impossible since $\varphi(x_{0})=u(x_{0})$ and $l>0$. 
\end{proof}

\section{Viscosity solutions are Weak solutions}

We show that if $u$ is a viscosity supersolution to $-\Delta_{p(x)}^{N}u\geq0$,
then it is a weak supersolution to $-\Delta_{p(x)}^{S}u\geq0$. The
same statement for subsolutions then follows by analogy.

We recall the usual partial ordering for symmetric $N\times N$ matrices
by setting $X\leq Y$ if $\left\langle X\xi,\xi\right\rangle \leq\left\langle Y\xi,\xi\right\rangle $
for all $\xi\in\mathbb{R}^{N}$. For a matrix $X$ we also set $\left\Vert X\right\Vert :=\max\left\{ \left|\lambda\right|:\lambda\text{ is an eigenvalue of }X\right\} $
and for vectors $\xi,\eta\in\mathbb{R}^{N}$ we use the notation $\xi\otimes\eta:=\xi\eta^{\prime}$,
i.e. $\xi\otimes\eta$ is an $N\times N$ matrix whose $\left(i,j\right)$
entry is $\xi_{i}\eta_{j}$.
\begin{defn}[Inf-convolution]
Let $q\geq2$ and $\varepsilon>0$. The inf-convolution of a bounded
function $u\in C(\Omega)$ is defined by
\begin{equation}
u_{\varepsilon}(x):=\inf_{y\in\Omega}\left\{ u(y)+\frac{1}{q\varepsilon^{q-1}}\left|x-y\right|^{q}\right\} .\label{eq:inf_convolution}
\end{equation}
\end{defn}
The inf-convolution is well known to provide good approximations of
viscosity supersolutions and often one only needs to consider it for
$q=2$ (see e.g. \cite{userguide}). However, as the authors in \cite{newequivalence}
observed, considering large enough $q$ essentially cancels the singularity
in the usual $p$-Laplace operator when $1<p<2$. In similar fashion
it also cancels the singularity of the operator $\Delta_{p(x)}^{S}$.
This is due to the property (v) in the next lemma. We also list some
other basic properties of the inf-convolution.
\begin{lem}
\label{lem:infconv_properties}Let $u\in C(\Omega)$ be a bounded
function. Then the inf-convolution $u_{\varepsilon}$ as defined in
(\ref{eq:inf_convolution}) has the following properties.
\begin{enumerate}
\item We have $u_{\varepsilon}\leq u$ in $\Omega$ and $u_{\varepsilon}\rightarrow u$
locally uniformly in $\Omega$ as $\varepsilon\rightarrow0$.
\item There exists $r(\varepsilon)>0$ such that
\[
u_{\varepsilon}(x)=\inf_{y\in B_{r(\varepsilon)}(x)\cap\Omega}\left\{ u(y)+\frac{1}{q\varepsilon^{q-1}}\left|x-y\right|^{q}\right\} 
\]
and $r(\varepsilon)\rightarrow0$ as $\varepsilon\rightarrow0$. In
fact we can choose $r(\varepsilon)=\left(q\varepsilon^{q-1}\mathrm{osc}_{\Omega}\thinspace u\right)^{\frac{1}{q}}$.
\item The function $u_{\varepsilon}$ is semi-concave in $\Omega_{r(\varepsilon)}$,
that is, the function $x\mapsto u_{\varepsilon}(x)-\frac{q-1}{2\varepsilon^{q-1}}r(\varepsilon)^{q-2}\left|x\right|^{2}$
is concave. 
\item If $x\in\Omega_{r(\varepsilon)}:=\left\{ x\in\Omega:\mathrm{dist}(x,\partial\Omega)<r(\varepsilon)\right\} $,
then there exists a point $x_{\varepsilon}\in B_{r(\varepsilon)}(x)$
such that $u_{\varepsilon}(x)=u(x_{\varepsilon})+\frac{1}{q\varepsilon^{q-1}}\left|x-x_{\varepsilon}\right|^{q}$. 
\item If $(\eta,X)\in J^{2,-}u_{\varepsilon}(x)$ with $x\in\Omega_{r(\varepsilon)}$,
then $\eta=\frac{\left(x-x_{\varepsilon}\right)}{\varepsilon^{q-1}}\left|x_{\varepsilon}-x\right|^{q-2}$
and $X\leq\frac{q-1}{\varepsilon}\left|\eta\right|^{\frac{q-2}{q-1}}I$,
where $x_{\varepsilon}$ is as in \textup{(iv)}.
\end{enumerate}
\end{lem}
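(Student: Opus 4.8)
The plan is to dispose of the five items in order, keeping in mind that the only object at work is the fixed radial weight $\phi(v):=\frac{1}{q\varepsilon^{q-1}}\left|v\right|^{q}$, so that $u_{\varepsilon}(x)=\inf_{y\in\Omega}\left(u(y)+\phi(x-y)\right)$. First I would record that, since $q\geq2$, one has $\phi\in C^{2}(\mathbb{R}^{N})$ with $D\phi(v)=\varepsilon^{-(q-1)}\left|v\right|^{q-2}v$ and $D^{2}\phi(v)=\varepsilon^{-(q-1)}\left(\left|v\right|^{q-2}I+(q-2)\left|v\right|^{q-4}v\otimes v\right)$, whose eigenvalues are $\varepsilon^{-(q-1)}\left|v\right|^{q-2}$ and $(q-1)\varepsilon^{-(q-1)}\left|v\right|^{q-2}$; in particular $D^{2}\phi(v)\leq(q-1)\varepsilon^{-(q-1)}\left|v\right|^{q-2}I$.

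For (i), taking $y=x$ in the infimum gives $u_{\varepsilon}\leq u$, and the local uniform convergence follows from (ii) together with the uniform continuity of $u$ on compact subsets of $\Omega$, since $u(x)\geq u_{\varepsilon}(x)\geq\inf_{B_{r(\varepsilon)}(x)\cap\Omega}u$. For (ii), because $u_{\varepsilon}(x)\leq u(x)$, any $y$ with $\left|x-y\right|>r(\varepsilon):=(q\varepsilon^{q-1}\mathrm{osc}_{\Omega}u)^{1/q}$ satisfies $u(y)+\phi(x-y)>u(y)+\mathrm{osc}_{\Omega}u\geq u(x)\geq u_{\varepsilon}(x)$ and so cannot lower the infimum; moreover $r(\varepsilon)\to0$ as $\varepsilon\to0$ because $q-1>0$. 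For (iv), when $x\in\Omega_{r(\varepsilon)}$ the closed ball $\overline{B_{r(\varepsilon)}(x)}$ is a compact subset of $\Omega$, the map $y\mapsto u(y)+\phi(x-y)$ is continuous there, and by (ii) its minimum over that ball equals $u_{\varepsilon}(x)$, producing the point $x_{\varepsilon}$.

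For (iii), by (ii) only competitors $y$ with $\left|x-y\right|\leq r(\varepsilon)$ matter when computing $u_{\varepsilon}$ on $\Omega_{r(\varepsilon)}$, and for those the eigenvalue bound gives $D^{2}\phi(x-y)\leq(q-1)\varepsilon^{-(q-1)}r(\varepsilon)^{q-2}I$; hence each relevant map $x\mapsto u(y)+\phi(x-y)-\frac{q-1}{2\varepsilon^{q-1}}r(\varepsilon)^{q-2}\left|x\right|^{2}$ is concave, and an infimum of concave functions is concave. For (v), with $x_{\varepsilon}$ as in (iv), inserting the single competitor $y=x_{\varepsilon}$ shows $u_{\varepsilon}(z)\leq u(x_{\varepsilon})+\phi(z-x_{\varepsilon})$ for all $z\in\Omega$, with equality at $z=x$, so the $C^{2}$ function $z\mapsto u(x_{\varepsilon})+\phi(z-x_{\varepsilon})$ touches $u_{\varepsilon}$ from above at $x$. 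Comparing its first and second order Taylor data at $x$ with any $(\eta,X)\in J^{2,-}u_{\varepsilon}(x)$ forces $\eta=D\phi(x-x_{\varepsilon})=\varepsilon^{-(q-1)}\left|x_{\varepsilon}-x\right|^{q-2}(x-x_{\varepsilon})$ and $X\leq D^{2}\phi(x-x_{\varepsilon})\leq(q-1)\varepsilon^{-(q-1)}\left|x_{\varepsilon}-x\right|^{q-2}I$; since $\left|\eta\right|=\varepsilon^{-(q-1)}\left|x_{\varepsilon}-x\right|^{q-1}$, substituting $\left|x_{\varepsilon}-x\right|^{q-2}=\varepsilon^{q-2}\left|\eta\right|^{(q-2)/(q-1)}$ yields $X\leq\frac{q-1}{\varepsilon}\left|\eta\right|^{(q-2)/(q-1)}I$.

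The items (i), (ii), (iv) are entirely routine; the part that needs care is (v), both in verifying that $\phi\in C^{2}(\mathbb{R}^{N})$ (this is exactly where $q\geq2$ is used, so that the dominating function is an admissible $C^{2}$ touching function) and in passing from the touching-from-above property to the matrix inequality for elements of $J^{2,-}u_{\varepsilon}(x)$. I also expect a little bookkeeping in (iii) — making rigorous that far-away competitors never affect $u_{\varepsilon}$ on $\Omega_{r(\varepsilon)}$, so that the infimum is genuinely concave there — but no essential difficulty; the real mathematical content is the elementary Hessian computation for $\phi$ and the algebraic rewriting at the end of (v).
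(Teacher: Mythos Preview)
Your proposal is correct, and for item (v) --- the only part the paper actually proves, citing the remaining items as well known --- your argument coincides with the paper's: both observe that the barrier $z\mapsto u(x_\varepsilon)+\phi(z-x_\varepsilon)$ touches $u_\varepsilon$ from above at $x$, deduce $\eta=D\phi(x-x_\varepsilon)$ and $X\le D^2\phi(x-x_\varepsilon)$, and then rewrite the Hessian bound in terms of $|\eta|$ via $|x-x_\varepsilon|=\varepsilon\,|\eta|^{1/(q-1)}$. Your sketches for (i)--(iv) are along standard lines, and the bookkeeping you flag in (iii) is genuine but minor (each competitor $g_y$ is only $C$-semi-concave on $\{|x-y|\le r(\varepsilon)\}$, not on all of $\Omega_{r(\varepsilon)}$, so the ``infimum of concave functions'' step needs a local formulation).
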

These properties are well known, see appendix of \cite{newequivalence}
and also \cite{convex_functionals} where more general ``flat inf-convolution''
is considered\textit{.} Regardless, we give a proof of (v) based on
\cite[p53]{nikos} due to its critical role in the proof of Lemma
\ref{lem:proof_lemma}.
\begin{proof}[Proof of property (v) in Lemma \ref{lem:infconv_properties}]
Let $(\eta,X)\in J^{2,-}u_{\varepsilon}(x)$. Then there is a function
$\varphi\in C^{2}(\mathbb{R}^{N})$ such that it touches $u_{\varepsilon}$
from below at $x$ and $D\varphi(x)=\eta$, $D^{2}\varphi(x)=X$.
Therefore for all $y,z\in\Omega$ we have
\begin{align*}
u(y)+\frac{\left|y-z\right|^{q}}{q\varepsilon^{q-1}}-\varphi(z)\geq & u_{\varepsilon}(z)-\varphi(z)\geq0.
\end{align*}
Choosing $y=x_{\varepsilon}$, we obtain
\[
\varphi(z)-\frac{\left|x_{\varepsilon}-z\right|^{q}}{q\varepsilon^{q-1}}\leq u(x_{\varepsilon})\text{ for all }z\in\Omega.
\]
Since $\varphi(x)=u_{\varepsilon}(x)=u(x_{\varepsilon})+\frac{\left|x_{\varepsilon}-x\right|^{q}}{q\varepsilon^{q-1}}$,
the above inequality means that the function
\[
z\mapsto\varphi(z)-\frac{\left|x_{\varepsilon}-z\right|^{q}}{q\varepsilon^{q-1}}=:\varphi(z)-\psi(z)
\]
has a maximum at $x$. Thus $\eta=D\psi(x)=\frac{\left(x-x_{\varepsilon}\right)}{\varepsilon^{q-1}}\left|x_{\varepsilon}-x\right|^{q-2}$
and
\begin{align*}
X\leq D^{2}\psi(x)= & \frac{1}{\varepsilon^{q-1}}\left|x_{\varepsilon}-x\right|^{q-4}\left(\left(q-2\right)\left(x_{\varepsilon}-x\right)\otimes\left(x_{\varepsilon}-x\right)+\left|x_{\varepsilon}-x\right|^{2}I\right)\\
\leq & \frac{1}{\varepsilon^{q-1}}\left|x_{\varepsilon}-x\right|^{q-4}\left(\left(q-2\right)\left\Vert \left(x_{\varepsilon}-x\right)\otimes\left(x_{\varepsilon}-x\right)\right\Vert I+\left|x_{\varepsilon}-x\right|^{2}I\right)\\
= & \frac{q-1}{\varepsilon^{q-1}}\left|x_{\varepsilon}-x\right|^{q-2}I\\
= & \frac{q-1}{\varepsilon^{q-1}}\left(\varepsilon\left|\eta\right|^{\frac{1}{q-1}}\right)^{q-2}I\\
= & \frac{q-1}{\varepsilon}\left|\eta\right|^{\frac{q-2}{q-1}}I.\qedhere
\end{align*}
\end{proof}
We will show that the inf-convolution provides approximations of viscosity
supersolutions to $-\Delta_{p(x)}^{N}u\geq0$. If there was no $x$-dependence
in the equation, it would be straightforward to show that the inf-convolution
of a supersolution is still a supersolution. However, the equation
$-\Delta_{p(x)}^{N}u\geq0$ has $x$-dependence caused by $p(x)$.
Regardless, in \cite[Thm 3]{Ishii95} it is shown that with some assumptions
on $G$, the inf-convolution $u_{\varepsilon}$ of a viscosity supersolution
to $G(x,u,Du,D^{2}u)\geq0$ is still a viscosity supersolution to
$G(x,u_{\varepsilon},Du_{\varepsilon},D^{2}u_{\varepsilon})\geq E(\varepsilon)$,
where $E(\varepsilon)\rightarrow0$ as $\varepsilon\rightarrow0$.

We prove a modified version of this theorem for the solutions of $-\Delta_{p(x)}^{N}u\geq0$.
The important modification is the term $\left|\eta\right|^{\min(p(x)-2,0)}$
in (\ref{eq:ishiiq_claimineq}) as it cancels a singular gradient
term that appears due to the error term in the proof of Lemma \ref{lem:proof_lemma},
see (\ref{eq:proof_lemma_liminfstuff}). Another difference is that
we consider inf-convolution with the exponent $q\geq2$.
\begin{lem}
\label{lem:ishiiq}Assume that $u$ is a uniformly continuous viscosity
supersolution to $-\Delta_{p(x)}^{N}u\geq0$ in $\Omega$. Then, whenever
$(\eta,X)\in J^{2,-}u_{\varepsilon}(x)$, $\eta\not=0$ and $x\in\Omega_{r(\varepsilon)}$,
it holds
\begin{equation}
\left|\eta\right|^{\min(p(x)-2,0)}F(x,\eta,X)\geq E(\varepsilon),\label{eq:ishiiq_claimineq}
\end{equation}
where $E(\varepsilon)\rightarrow0$ as $\varepsilon\rightarrow0$.
The error function $E$ depends only on $p$, $q$ and the modulus
of continuity of $u$.
\end{lem}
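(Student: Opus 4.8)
The plan is to adapt Ishii's inf-convolution argument \cite[Thm 3]{Ishii95} to the operator $F$, whose only $x$-dependence is through the coefficient $p(x)$. Fix $(\eta,X)\in J^{2,-}u_{\varepsilon}(x)$ with $\eta\neq0$ and $x\in\Omega_{r(\varepsilon)}$, take $\varphi\in C^{2}$ touching $u_{\varepsilon}$ from below at $x$ with $D\varphi(x)=\eta$, $D^{2}\varphi(x)=X$, and let $x_{\varepsilon}$ be the point from Lemma~\ref{lem:infconv_properties}(iv), so that $|x-x_{\varepsilon}|\le r(\varepsilon)$ and $u_{\varepsilon}(x)=u(x_{\varepsilon})+\frac{1}{q\varepsilon^{q-1}}|x-x_{\varepsilon}|^{q}$. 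First I would check that the shifted, translated test function $\tilde{\varphi}(y):=\varphi(y+x-x_{\varepsilon})-\frac{1}{q\varepsilon^{q-1}}|x-x_{\varepsilon}|^{q}$ touches $u$ from below at $x_{\varepsilon}$: indeed $\varphi\le u_{\varepsilon}$ and the definition of $u_{\varepsilon}$ give $\varphi(y+x-x_{\varepsilon})\le u(y)+\frac{1}{q\varepsilon^{q-1}}|x-x_{\varepsilon}|^{q}$ for all $y$, with equality at $y=x_{\varepsilon}$. Since $D\tilde{\varphi}(x_{\varepsilon})=\eta\neq0$ and $D^{2}\tilde{\varphi}(x_{\varepsilon})=X$, the viscosity supersolution property of $u$ yields $F(x_{\varepsilon},\eta,X)\ge0$.

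The next step is to compare $F$ at $x$ and at $x_{\varepsilon}$. As $F$ depends on $x$ only through $p$,
\[
F(x,\eta,X)-F(x_{\varepsilon},\eta,X)=-(p(x)-p(x_{\varepsilon}))\,\frac{\langle X\eta,\eta\rangle}{|\eta|^{2}},\qquad\text{so}\qquad F(x,\eta,X)\ge-(p(x)-p(x_{\varepsilon}))\,\frac{\langle X\eta,\eta\rangle}{|\eta|^{2}},
\]
and the whole task is to bound this error term after multiplying by $|\eta|^{\min(p(x)-2,0)}$. The three ingredients I would use are: (i) Lipschitz continuity of $p$, $|p(x)-p(x_{\varepsilon})|\le L|x-x_{\varepsilon}|$; (ii) Lemma~\ref{lem:infconv_properties}(v) together with the computation in its proof, namely $\langle X\eta,\eta\rangle/|\eta|^{2}\le\frac{q-1}{\varepsilon}|\eta|^{(q-2)/(q-1)}$ and, more precisely, $X\le\frac{|x-x_{\varepsilon}|^{q-2}}{\varepsilon^{q-1}}\big((q-2)\hat{\eta}\otimes\hat{\eta}+I\big)$ with $\hat{\eta}=\eta/|\eta|$ and $|\eta|=|x-x_{\varepsilon}|^{q-1}/\varepsilon^{q-1}$ (whence $|p(x)-p(x_{\varepsilon})|\cdot\big(\langle X\eta,\eta\rangle/|\eta|^{2}\big)_{+}\le L(q-1)|\eta|$); and (iii) the decay of the ``distance'', which is free from
\[
\frac{|x-x_{\varepsilon}|^{q}}{q\varepsilon^{q-1}}=u_{\varepsilon}(x)-u(x_{\varepsilon})\le u(x)-u(x_{\varepsilon})\le\omega\!\left(|x-x_{\varepsilon}|\right)\le\omega\!\left(r(\varepsilon)\right),
\]
$\omega$ being the modulus of continuity of $u$; combined with Lemma~\ref{lem:infconv_properties}(ii) this gives in particular $|x-x_{\varepsilon}|\,|\eta|=|x-x_{\varepsilon}|^{q}/\varepsilon^{q-1}\le q\,\omega(r(\varepsilon))\to0$.

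I would then split into the cases $p(x)\ge2$ and $p(x)<2$. If $p(x)\ge2$ the weight is $1$ and $F(x,\eta,\cdot)$ is degenerate elliptic: when $(p(x)-p(x_{\varepsilon}))\langle X\eta,\eta\rangle\le0$ the error term is nonnegative and nothing is needed, while otherwise one estimates with (i)--(iii), using in the subcase $\langle X\eta,\eta\rangle<0$ also $F(x_{\varepsilon},\eta,X)\ge0$ and the transverse part of the Hessian bound of (ii) to control $-\langle X\eta,\eta\rangle/|\eta|^{2}$ through $\tr X$. If $p(x)<2$ the equation is singular at $\eta=0$, and this is exactly where the weight $|\eta|^{\min(p(x)-2,0)}=|\eta|^{p(x)-2}$ is indispensable: it is the precise power that cancels the worst growth $|\eta|^{(q-2)/(q-1)}$ coming from the Hessian bound, after which the error is again bounded by a constant (depending on $L$, $q$, $N$, $p^{\pm}$) times $\omega(r(\varepsilon))$, plus the already-controlled quantity $|x-x_{\varepsilon}|\,|\eta|$. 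Collecting both cases produces a single $E(\varepsilon)$ depending only on $p$, $q$ and $\omega$, and $E(\varepsilon)\to0$ because $r(\varepsilon)\to0$.

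The delicate point, and the step I expect to cost the most work, is the singular range $p(x)<2$: there $\langle X\eta,\eta\rangle/|\eta|^{2}$ is genuinely unbounded as $\varepsilon\to0$, so no crude estimate will do, and one must use the sharp exponent $\frac{q-2}{q-1}$ in Lemma~\ref{lem:infconv_properties}(v) (the very reason the inf-convolution is taken with a free parameter $q\ge2$ rather than $q=2$), the multiplicative weight $|\eta|^{\min(p(x)-2,0)}$, and the smallness of $|x-x_{\varepsilon}|\,|\eta|$ simultaneously, while verifying that the resulting bound is uniform over all admissible subjets $(\eta,X)$.
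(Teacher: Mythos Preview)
Your translation argument is correct and elegant: the shifted test function does touch $u$ from below at $x_{\varepsilon}$, so indeed $(\eta,X)\in J^{2,-}u(x_{\varepsilon})$ and $F(x_{\varepsilon},\eta,X)\ge0$. The gap is in the error estimate that follows. Writing
\[
F(x,\eta,X)=F(x_{\varepsilon},\eta,X)-\bigl(p(x)-p(x_{\varepsilon})\bigr)\frac{\langle X\eta,\eta\rangle}{|\eta|^{2}},
\]
the Lipschitz bound on $p$ contributes one factor $|x-x_{\varepsilon}|$, and the one-sided Hessian bound from Lemma~\ref{lem:infconv_properties}(v) (or your ``transverse'' variant via $F(x_{\varepsilon},\eta,X)\ge0$) contributes $\frac{q-1}{\varepsilon^{q-1}}|x-x_{\varepsilon}|^{q-2}$. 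The product is
\[
-F(x,\eta,X)\;\le\;C\,\frac{|x-x_{\varepsilon}|^{q-1}}{\varepsilon^{q-1}}\;=\;C\,|\eta|,
\]
and this is the best your approach yields in every case. But $|\eta|=|x-x_{\varepsilon}|^{q-1}/\varepsilon^{q-1}$ does \emph{not} tend to zero uniformly: from $|x-x_{\varepsilon}|^{q}\le q\varepsilon^{q-1}\omega(r(\varepsilon))$ one only gets $|\eta|\le C\bigl(\omega(r(\varepsilon))/\varepsilon\bigr)^{(q-1)/q}$, which blows up for a general modulus $\omega$. So the bound fails already when $p(x)\ge2$, and for $p(x)<2$ the weight $|\eta|^{p(x)-2}$ turns it into $|\eta|^{p(x)-1}$, which still diverges. (Your remark that the weight ``cancels the worst growth $|\eta|^{(q-2)/(q-1)}$'' overlooks that the Lipschitz factor $|x-x_{\varepsilon}|=\varepsilon|\eta|^{1/(q-1)}$ has already upgraded that growth to $|\eta|^{1}$.)

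The paper closes this gap by \emph{not} using the same matrix $X$ at $x_{\varepsilon}$. It applies the Theorem of Sums to the doubled function, obtaining matrices $Y,Z$ with $(\eta,-Y)\in\overline{J}^{2,-}u(x_{\varepsilon})$ and a block matrix inequality. Testing that inequality with the vector $\bigl(\sqrt{p(x_{\varepsilon})-1}\,\hat\eta,\sqrt{p(x)-1}\,\hat\eta\bigr)$ produces directly the combination $(p(x_{\varepsilon})-1)\langle Y\hat\eta,\hat\eta\rangle-(p(x)-1)\langle Z\hat\eta,\hat\eta\rangle$, bounded by $\Lambda^{2}\|M\|$ with $\Lambda=\sqrt{p(x)-1}-\sqrt{p(x_{\varepsilon})-1}$. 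The point is that $\Lambda^{2}\le C(p)|x-x_{\varepsilon}|^{2}$ is \emph{quadratic}, which yields $-F(x,\eta,X)\le C|x-x_{\varepsilon}|^{q}/\varepsilon^{q-1}\le C\omega(r(\varepsilon))\to0$. That extra power of $|x-x_{\varepsilon}|$---unavailable from the simple translation---is exactly what makes the lemma go through.
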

\begin{proof}
Fix $x\in\Omega_{r(\varepsilon)}$ and $(\eta,X)\in J^{2,-}u_{\varepsilon}(x)$,
$\eta\not=0$. Then by Lemma \ref{lem:infconv_properties} there is
$x_{\varepsilon}\in B_{r(\varepsilon)}(x)$ such that 
\begin{equation}
u_{\varepsilon}(x)=u(x_{\varepsilon})+\frac{\left|x_{\varepsilon}-x\right|^{q}}{q\varepsilon^{q-1}}\label{eq:ishiiq_xe}
\end{equation}
and $\eta=\frac{\left(x-x_{\varepsilon}\right)}{\varepsilon^{q-1}}\left|x_{\varepsilon}-x\right|^{q-2}$.
There exists a function $\varphi\in C^{2}(\mathbb{R}^{N})$ such that
it touches $u_{\varepsilon}$ from below at $x$ and $D\varphi(x)=\eta$,
$D^{2}\varphi(x)=X$. By the definition of inf-convolution
\begin{align}
u(y)-\varphi(z)+ & \frac{\left|y-z\right|^{q}}{q\varepsilon^{q-1}}\geq u_{\varepsilon}(z)-\varphi(z)\geq0\text{ for all }y,z\in\Omega_{r(\varepsilon)}.\label{eq:ishiiqq1}
\end{align}
Since by (\ref{eq:ishiiq_xe}) we have $u(x_{\varepsilon})=\varphi(x)-\frac{\left|x_{\varepsilon}-x\right|^{q}}{q\varepsilon^{q-1}},$
it follows from (\ref{eq:ishiiqq1}) that the expression $u(y)-\varphi(z)+\frac{\left|y-z\right|^{q}}{q\varepsilon^{q-1}}$
reaches its minimum at $(y,z)=(x_{\varepsilon},x)$. Thus 
\begin{align*}
\max_{(y,z)\in\Omega_{r(\varepsilon)}\times\Omega_{r(\varepsilon)}}-u(y)+\varphi(z)-\frac{\left|y-z\right|^{q}}{q\varepsilon^{q-1}}=- & u(x_{\varepsilon})+\varphi(x)-\frac{\left|x_{\varepsilon}-x\right|^{q}}{q\varepsilon^{q-1}}.
\end{align*}
\[
\]
We denote $\Phi(y,z):=\frac{1}{q\varepsilon^{q-1}}\left|y-z\right|^{q}$
and invoke the Theorem of sums (see \cite{userguide}). There exist
$Y,Z\in S^{N}$ such that 
\[
(\eta,-Y)\in\overline{J}^{2,-}u(x_{\varepsilon}),\thickspace\thickspace(\eta,-Z)\in\overline{J}^{2,+}\varphi(x)
\]
and
\begin{equation}
\begin{pmatrix}Y & 0\\
0 & -Z
\end{pmatrix}\leq D^{2}\Phi(x_{\varepsilon},x)+\varepsilon^{q-1}\left(D^{2}\Phi(x_{\varepsilon},x)\right)^{2}\label{eq:ishiiq_sumineq}
\end{equation}
where
\[
D^{2}\Phi(x_{\varepsilon},x)=\left(\begin{array}{cc}
M & -M\\
-M & M
\end{array}\right)
\]
with $M=\frac{1}{\varepsilon^{q-1}}\left|x_{\varepsilon}-x\right|^{q-4}\left(\left(q-2\right)\left(x_{\varepsilon}-x\right)\otimes\left(x_{\varepsilon}-x\right)+\left|x_{\varepsilon}-x\right|^{2}I\right)$
and
\[
\left(D^{2}\Phi(x_{\varepsilon},x)\right)^{2}=2\begin{pmatrix}M^{2} & -M^{2}\\
-M^{2} & M^{2}
\end{pmatrix}.
\]
The above implies $Y\leq Z\leq-D^{2}\varphi(x)=-X$. Multiplying (\ref{eq:ishiiq_sumineq})
by the $\mathbb{R}^{2N}$ vector $(\frac{\eta}{\left|\eta\right|}\sqrt{p(x_{\varepsilon})-1},\frac{\eta}{\left|\eta\right|}\sqrt{p(x)-1})$
from both sides yields
\begin{equation}
\frac{(p(x_{\varepsilon})-1)}{\left|\eta\right|^{2}}\left\langle Y\eta,\eta\right\rangle -\frac{\left(p(x)-1\right)}{\left|\eta\right|^{2}}\left\langle Z\eta,\eta\right\rangle \leq\varLambda^{2}\left\langle \left(M+2\varepsilon^{q-1}M^{2}\right)\frac{\eta}{\left|\eta\right|},\frac{\eta}{\left|\eta\right|}\right\rangle ,\label{eq:ishiiq_lambdaineq}
\end{equation}
where $\varLambda=\sqrt{p(x)-1}-\sqrt{p(x_{\varepsilon})-1}$. We
have
\begin{align}
0\leq & F(x_{\varepsilon},\eta,-Y)\nonumber \\
= & F(x,\eta,Z)-F(x_{\varepsilon},\eta,Y)-F(x,\eta,Z)\nonumber \\
= & \left(p(x_{\varepsilon})-1\right)\left\langle Y\frac{\eta}{\left|\eta\right|},\frac{\eta}{\left|\eta\right|}\right\rangle -\left(p(x)-1\right)\left\langle Z\frac{\eta}{\left|\eta\right|},\frac{\eta}{\left|\eta\right|}\right\rangle \nonumber \\
 & +\tr(Y)-\left\langle Y\frac{\eta}{\left|\eta\right|},\frac{\eta}{\left|\eta\right|}\right\rangle -\tr(Z)+\left\langle Z\frac{\eta}{\left|\eta\right|},\frac{\eta}{\left|\eta\right|}\right\rangle +F(x,\eta,-Z)\nonumber \\
\leq & \varLambda^{2}\left\langle \left(M+2\varepsilon^{q-1}M^{2}\right)\frac{\eta}{\left|\eta\right|},\frac{\eta}{\left|\eta\right|}\right\rangle +F(x,\eta,X),\label{eq:ishiiq_main}
\end{align}
where we used (\ref{eq:ishiiq_lambdaineq}) and the fact that $Y\leq Z$
implies
\[
\tr\left(Y-Z\right)-\left\langle \left(Y-Z\right)\frac{\eta}{\left|\eta\right|},\frac{\eta}{\left|\eta\right|}\right\rangle \leq0.
\]
We have the estimate
\begin{align*}
\left\Vert M\right\Vert \leq & \frac{1}{\varepsilon^{q-1}}\left|x_{\varepsilon}-x\right|^{q-4}\left(\left(q-2\right)\left\Vert \left(x_{\varepsilon}-x\right)\otimes\left(x_{\varepsilon}-x\right)\right\Vert +\left|x_{\varepsilon}-x\right|^{2}\left\Vert I\right\Vert \right)\\
= & \frac{q-1}{\varepsilon^{q-1}}\left|x_{\varepsilon}-x\right|^{q-2}.
\end{align*}
Since $p$ is Lipschtiz continuous and $p^{-}>1$, we have also
\begin{align*}
\varLambda^{2}= & \frac{\left|p(x)-p(x_{\varepsilon})\right|^{2}}{\left|\sqrt{p(x)-1}+\sqrt{p(x_{\varepsilon})-1}\right|^{2}}\leq C(p)\left|x-x_{\varepsilon}\right|^{2}.
\end{align*}
Combining these with (\ref{eq:ishiiq_main}) we get (we may assume
that $r(\varepsilon)<1$)
\begin{align}
-F(x,\eta,X)\leq & \varLambda^{2}\left(\left\Vert M\right\Vert +2\varepsilon^{q-1}\left\Vert M\right\Vert ^{2}\right)\nonumber \\
\leq & \varLambda^{2}\left(\frac{q-1}{\varepsilon^{q-1}}\left|x_{\varepsilon}-x\right|^{q-2}+2\varepsilon^{q-1}\left(\frac{q-1}{\varepsilon^{q-1}}\right)^{2}\left|x_{\varepsilon}-x\right|^{2(q-2)}\right)\nonumber \\
\leq & \frac{3\left(q-1\right)^{2}}{\varepsilon^{q-1}}\varLambda^{2}\left|x_{\varepsilon}-x\right|^{q-2}\nonumber \\
\leq & C(p,q)\frac{1}{\varepsilon^{q-1}}\left|x_{\varepsilon}-x\right|^{q}.\label{eq:ishiiq_main2}
\end{align}
Moreover, by uniform continuity of $u$ there is a modulus of continuity
$\omega$ such that $\omega(t)\rightarrow0$ as $t\rightarrow0$ and
$\left|u(y)-u(z)\right|\leq\omega(\left|y-z\right|)$ for all $y,z\in\Omega$.
Hence by (\ref{eq:ishiiq_xe})
\begin{align}
\left|x_{\varepsilon}-x\right|\leq & \left(q\varepsilon^{q-1}\left(u(x)-u(x_{\varepsilon})\right)\right)^{\frac{1}{q}}\leq q^{\frac{1}{q}}\varepsilon^{\frac{q-1}{q}}\omega(r(\varepsilon))^{\frac{1}{q}}.\label{eq:ishiiq_uniformest}
\end{align}
We now consider the situations $p(x)\leq2$ and $p(x)>2$ separately.\\
If $p(x)\leq2$, we multiply (\ref{eq:ishiiq_main2}) by $\left|\eta\right|^{p(x)-2}$
and estimate using (\ref{eq:ishiiq_uniformest}). We get
\begin{align*}
-\left|\eta\right|^{p(x)-2}F(x,\eta,X)\leq & C(p,q)\frac{1}{\varepsilon^{q-1}}\left|x_{\varepsilon}-x\right|^{q}\left|\eta\right|^{p(x)-2}\\
= & C(p,q)\frac{1}{\varepsilon^{q-1}}\left|x_{\varepsilon}-x\right|^{q}\left|\frac{1}{\varepsilon^{q-1}}\left(x-x_{\varepsilon}\right)\left|x_{\varepsilon}-x\right|^{q-2}\right|^{p(x)-2}\\
= & C(p,q)\left(\frac{1}{\varepsilon}\right)^{(q-1)(p(x)-1)}\left|x_{\varepsilon}-x\right|^{q+(q-1)(p(x)-2)}\\
\leq & C(p,q)\left(\frac{1}{\varepsilon}\right)^{(q-1)(p(x)-1)}\left(q^{\frac{1}{q}}\varepsilon^{\frac{q-1}{q}}\omega(r(\varepsilon))^{\frac{1}{q}}\right)^{q+(q-1)(p(x)-2)}\\
= & C(p,q)\left(\frac{1}{\varepsilon}\right)^{\left(\frac{q-1}{q}\right)(p(x)-2)}\omega(r(\varepsilon))^{\frac{q+(q-1)(p(x)-2)}{q}}\\
\leq & C(p,q)\omega(r(\varepsilon))^{\frac{q+(q-1)(p^{-}-2)}{q}},
\end{align*}
where the last inequality is true when $\varepsilon<1$ is so small
that $\omega(r(\varepsilon))<1$. This proves (\ref{eq:ishiiq_claimineq})
when $p(x)\leq2$.\\
If $p(x)>2$, we estimate (\ref{eq:ishiiq_main2}) directly using
(\ref{eq:ishiiq_uniformest}). We get
\begin{align*}
-F(x,\eta,X)\leq & C(p,q)\frac{1}{\varepsilon^{q-1}}\left(q^{\frac{1}{q}}\varepsilon^{\frac{q-1}{q}}\omega(r(\varepsilon))^{\frac{1}{q}}\right)^{q}=C(p,q)\omega(r(\varepsilon))),
\end{align*}
which proves (\ref{eq:ishiiq_claimineq}) when $p(x)>2$.
\end{proof}
Next we will use the previous lemma to show that inf-convolution of
a viscosity supersolution to $-\Delta_{p(x)}^{N}u\geq0$ in $\Omega$
is a weak supersolution to $-\Delta_{p(x)}^{S}u\geq0$ in $\Omega_{r(\varepsilon)}$
up to some error term. Before proceeding we make some remarks about
the point-wise differentiability of inf-convolution.
\begin{rem}
\label{rem:inf_remark}It follows from semi-concavity that the inf-convolution
$u_{\varepsilon}$ is locally Lipschitz in $\Omega_{r(\varepsilon)}$
(see \cite[p267]{measuretheoryevans}). Therefore it belongs in $W_{loc}^{1,\infty}(\Omega_{r(\varepsilon)})$,
is differentiable almost everywhere in $\Omega_{r(\varepsilon)}$,
and its derivative agrees with its Sobolev derivative almost everywhere
in $\Omega_{r(\varepsilon)}$ (see \cite[p155 and p265]{measuretheoryevans}). 

By Lemma \ref{lem:infconv_properties} the function $\phi(x):=u_{\varepsilon}(x)-C(q,\varepsilon,u)\left|x\right|^{2}$
is concave in $\Omega_{r(\varepsilon)}$. Thus Alexandrov's theorem
implies that $u_{\varepsilon}$ is twice differentiable almost everywhere
in $\Omega_{r(\varepsilon)}$. Furthermore, the proof of Alexandrov's
theorem in \cite[p273]{measuretheoryevans} establishes that if $\phi_{j}$
is the standard mollification of $\phi$, then $D^{2}\phi_{j}\rightarrow D^{2}\phi$
almost everywhere in $\Omega_{r(\varepsilon)}$.
\end{rem}
\begin{lem}
\label{lem:proof_lemma} Assume that $u$ is a uniformly continuous
viscosity supersolution to $-\Delta_{p(x)}^{N}u\geq0$ in $\Omega$.
Let $q>2$ be so large that $p^{-}-2+\frac{q-2}{q-1}\geq0$ and let
$u_{\varepsilon}$ be the inf-convolution of $u$ as defined in (\ref{eq:inf_convolution}).
Then 
\[
\int_{\Omega_{r(\varepsilon)}}\left|Du_{\varepsilon}\right|^{p(x)-2}Du_{\varepsilon}\cdot\left(D\varphi+\log\left|Du_{\varepsilon}\right|Dp\,\varphi\right)\d x\geq E(\varepsilon)\int_{\Omega_{r(\varepsilon)}}\left|Du_{\varepsilon}\right|^{s(x)}\varphi\d x
\]
for all non-negative $\varphi\in W^{1,p(\cdot)}(\Omega_{r(\varepsilon)})$
with compact support, where $E(\varepsilon)\rightarrow0$ as $\varepsilon\rightarrow0$
and $s(x)=\max(p(x)-2,0)$.
\end{lem}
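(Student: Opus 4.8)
The plan is to use the mollification argument sketched in the introduction: we test the conclusion of Lemma~\ref{lem:ishiiq} against an approximating sequence and pass to the limit. Concretely, fix $\varphi\in W^{1,p(\cdot)}(\Omega_{r(\varepsilon)})$ non-negative with compact support in some $\Omega'\Subset\Omega_{r(\varepsilon)}$, and let $\phi(x):=u_{\varepsilon}(x)-C(q,\varepsilon,u)|x|^{2}$ be the concave function from Remark~\ref{rem:inf_remark}, with $\phi_{j}$ its standard mollification and $p_{j}$ the standard mollification of $p$. At every point $x$ where $u_{\varepsilon}$ is twice differentiable (a.e.\ in $\Omega_{r(\varepsilon)}$) we have $(Du_{\varepsilon}(x),D^{2}u_{\varepsilon}(x))\in J^{2,-}u_{\varepsilon}(x)$, so Lemma~\ref{lem:ishiiq} applies pointwise wherever $Du_{\varepsilon}(x)\neq0$: $|Du_{\varepsilon}|^{\min(p(x)-2,0)}F(x,Du_{\varepsilon},D^{2}u_{\varepsilon})\geq E(\varepsilon)$. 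Multiplying by $|Du_{\varepsilon}|^{\max(p(x)-2,0)}=|Du_{\varepsilon}|^{s(x)}$ and rearranging, we get the pointwise inequality
\[
-|Du_{\varepsilon}|^{p(x)-2}\Bigl(\Delta u_{\varepsilon}+\tfrac{p(x)-2}{|Du_{\varepsilon}|^{2}}\Delta_{\infty}u_{\varepsilon}\Bigr)\geq E(\varepsilon)|Du_{\varepsilon}|^{s(x)}
\]
a.e.\ on $\{Du_{\varepsilon}\neq0\}$. On $\{Du_{\varepsilon}=0\}$ one checks directly (using semiconcavity, which forces $D^{2}u_{\varepsilon}\leq \frac{q-1}{\varepsilon}r(\varepsilon)^{q-2}I$, hence the left side is $\geq 0=E(\varepsilon)|Du_{\varepsilon}|^{s(x)}$ when $p(x)\geq2$; when $p(x)<2$ one argues that this set has measure zero or that $Du_\varepsilon=0$ a.e.\ there forces $D^2u_\varepsilon=0$ by Alexandrov differentiability), so the inequality holds a.e.\ in $\Omega_{r(\varepsilon)}$.

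\textbf{Passing through the mollification.} Multiply the pointwise inequality by $\varphi\geq0$ and integrate over $\Omega'$. The key point is that $\Delta u_{\varepsilon}=\Delta\phi+2NC$ and $D^{2}u_{\varepsilon}=D^{2}\phi+2CI$, and by Remark~\ref{rem:inf_remark} we have $D^{2}\phi_{j}\to D^{2}\phi$ a.e., while $Du_{\varepsilon}$ is bounded in $L^{\infty}_{loc}$ (Remark~\ref{rem:inf_remark}) and $Du_{\varepsilon,j}\to Du_{\varepsilon}$ a.e.\ and locally uniformly in the sup norm as well. Writing $u_{\varepsilon,j}:=\phi_{j}+C|x|^{2}$ (so $Du_{\varepsilon,j}\to Du_{\varepsilon}$, $D^{2}u_{\varepsilon,j}\to D^{2}u_{\varepsilon}$ a.e.\ and uniformly on compacts for the gradient), integrate by parts on the mollified level exactly as in the proof of Theorem~\ref{thm:visc}, equations \eqref{eq:visc_eq22}: since $u_{\varepsilon,j}\in C^{\infty}$,
\[
\int_{\Omega'}-|Du_{\varepsilon,j}|^{p_{j}(x)-2}\Bigl(\Delta u_{\varepsilon,j}+\tfrac{p_{j}(x)-2}{|Du_{\varepsilon,j}|^{2}}\Delta_{\infty}u_{\varepsilon,j}\Bigr)\varphi\d x=\int_{\Omega'}|Du_{\varepsilon,j}|^{p_{j}(x)-2}Du_{\varepsilon,j}\cdot\bigl(D\varphi+\log|Du_{\varepsilon,j}|\,Dp_{j}\,\varphi\bigr)\d x,
\]
wherever $Du_{\varepsilon,j}\neq0$; the singular set is handled by the standard device of replacing $|Du_{\varepsilon,j}|^{2}$ by $|Du_{\varepsilon,j}|^{2}+\delta$, doing the integration by parts, and letting $\delta\to0$ using that $p^{-}-2+\frac{q-2}{q-1}\geq0$ controls the gradient power (this is exactly where the largeness of $q$ via property (v) is used — the bound $\|D^{2}u_{\varepsilon}\|\leq\frac{q-1}{\varepsilon}|Du_{\varepsilon}|^{\frac{q-2}{q-1}}$ makes $|Du_{\varepsilon}|^{p(x)-2}\cdot|D^2u_\varepsilon|$ integrable up to the zero set of the gradient). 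Then let $j\to\infty$: the left side converges to $\int_{\Omega'}-|Du_{\varepsilon}|^{p(x)-2}(\Delta u_{\varepsilon}+\cdots)\varphi\geq E(\varepsilon)\int_{\Omega'}|Du_{\varepsilon}|^{s(x)}\varphi$ by dominated convergence (the integrand on the mollified side is dominated, using $\|D^2\phi_j\|\le\|D^2\phi\|*\rho_j$ and the $L^\infty$ gradient bound), and the right side converges to $\int_{\Omega'}|Du_{\varepsilon}|^{p(x)-2}Du_{\varepsilon}\cdot(D\varphi+\log|Du_{\varepsilon}|\,Dp\,\varphi)\d x$ — again by dominated convergence, where the $\log|Du_{\varepsilon,j}|$ term is controlled by the bound $|a^{s}\log a|\leq C_\delta(a^{s-\delta}+a^{s+\delta})$ together with the uniform $L^{\infty}_{loc}$ gradient bound and $p_j\to p$ uniformly. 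This chain of inequalities is precisely the claim.

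\textbf{Main obstacle.} The delicate step is the treatment of the zero set of $Du_{\varepsilon}$ (and of $Du_{\varepsilon,j}$) when $1<p^{-}<2$, where $|Du_{\varepsilon}|^{p(x)-2}$ blows up. This is exactly what the hypothesis $q>2$ with $p^{-}-2+\frac{q-2}{q-1}\geq0$ and property (v) of Lemma~\ref{lem:infconv_properties} are designed to neutralize: the Hessian bound $D^{2}u_{\varepsilon}\leq\frac{q-1}{\varepsilon}|Du_{\varepsilon}|^{\frac{q-2}{q-1}}I$ (valid a.e., via the subjet) shows that $|Du_{\varepsilon}|^{p(x)-2}|\langle D^{2}u_{\varepsilon}\,\xi,\xi\rangle|\lesssim|Du_{\varepsilon}|^{p(x)-2+\frac{q-2}{q-1}}$, which is bounded near the zero set, so all integrands that appear are in fact in $L^{\infty}_{loc}$ and every limit is justified by dominated convergence. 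One must also verify that the error term $E(\varepsilon)\int_{\Omega_{r(\varepsilon)}}|Du_{\varepsilon}|^{s(x)}\varphi\d x$ is finite — immediate since $Du_{\varepsilon}\in L^{\infty}_{loc}$ and $\varphi$ has compact support — and that it genuinely tends to $0$ with $\varepsilon$, which follows because $E(\varepsilon)\to0$ from Lemma~\ref{lem:ishiiq} and the integral is bounded uniformly on the fixed compact support of $\varphi$ once $\varepsilon$ is small (using $u_{\varepsilon}\to u$ locally uniformly and the local Lipschitz bounds). A secondary technical point, worth one sentence, is that the inequality of Lemma~\ref{lem:ishiiq} is stated for $(\eta,X)\in J^{2,-}u_{\varepsilon}(x)$ with $\eta\neq0$, so one needs the a.e.\ twice-differentiability from Remark~\ref{rem:inf_remark} to know that $(Du_{\varepsilon}(x),D^{2}u_{\varepsilon}(x))$ is a genuine subjet element at a.e.\ point.
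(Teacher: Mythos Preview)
Your overall strategy---mollify $u_\varepsilon$ and $p$, integrate by parts with a $\delta$-regularized operator, pass to the limit, and invoke Lemma~\ref{lem:ishiiq} pointwise---is exactly the paper's. The execution, however, has a real gap in the limit $j\to\infty$ on the second-order side.

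You claim the left-hand side converges by dominated convergence, justified by ``$\|D^2\phi_j\|\le\|D^2\phi\|*\rho_j$''. This is not valid: the distributional Hessian of a concave function is a (negative) matrix-valued \emph{measure}, and its singular part is in general nontrivial, so $D^2\phi_j=\mu*\rho_j$ is not dominated by any fixed $L^1_{loc}$ function as $j\to\infty$. (Think of a piecewise-linear concave $\phi$: the pointwise Hessian vanishes a.e., but the mollified Hessians concentrate mass near the creases.) Only the \emph{upper} bound $D^2u_{\varepsilon,j}\le C(q,\varepsilon,u)I$ survives mollification, because it comes from concavity of $\phi_j$. Consequently the correct tool is Fatou's lemma applied to a sequence bounded from below, which yields only the inequality $\int(\text{limit of LHS integrand})\le\liminf_j(\text{LHS})=\lim_j(\text{RHS})$, not an equality. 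This inequality is all that is needed, but your proof as written does not establish it.

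Relatedly, your order of limits is the reverse of what works. You send $\delta\to0$ at the $j$-level and then $j\to\infty$, invoking property~(v) to handle the singularity. But property~(v)---the bound $D^2u_\varepsilon\le\frac{q-1}{\varepsilon}|Du_\varepsilon|^{\frac{q-2}{q-1}}I$---holds for $u_\varepsilon$, not for the mollification $u_{\varepsilon,j}$; mollifying does not preserve it. The paper therefore keeps $\delta>0$ fixed while letting $j\to\infty$ (Fatou on the LHS, using only the concavity bound $D^2u_{\varepsilon,j}\le CI$; dominated convergence on the RHS), obtaining the auxiliary inequality for $u_\varepsilon$ with $\delta>0$. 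Only then does it send $\delta\to0$: at that stage property~(v) for $u_\varepsilon$ legitimately gives the lower bound needed for a second application of Fatou, and on $\{Du_\varepsilon=0\}$ one uses that $q>2$ forces $D^2u_\varepsilon\le0$ there, so $-\delta^{(p(x)-2)/2}\Delta u_\varepsilon\ge0$. Your aside that ``$Du_\varepsilon=0$ a.e.\ there forces $D^2u_\varepsilon=0$ by Alexandrov differentiability'' is not correct and is not needed once the limits are ordered this way.
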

\begin{proof}
It is enough to consider $\varphi\in C_{0}^{\infty}(\Omega_{r(\varepsilon)})$.
This can be seen in the same way as Lemma \ref{lem:cinfty_testfunctions},
but since $u_{\varepsilon}\in W_{loc}^{1,\infty}(\Omega_{r(\varepsilon)})$,
the proof is even simpler.

\textbf{(Step 1)} We show that $u_{\varepsilon}$ satisfies the auxiliary
inequality (\ref{eq:aux}) for all $0<\delta<1$. As mentioned in
Remark \ref{rem:inf_remark}, the function $\phi(x):=u_{\varepsilon}(x)-C(q,\varepsilon,u)\left|x\right|^{2}$
is concave in $\Omega_{r(\varepsilon)}$ and we can approximate it
by smooth concave functions $\phi_{j}$ so that $\left(\phi_{j},D\phi_{j},D^{2}\phi_{j}\right)\rightarrow\left(\phi,D\phi,D^{2}\phi\right)$
almost everywhere in $\Omega_{r(\varepsilon)}$. We define 
\[
u_{\varepsilon,j}(x):=\phi_{j}(x)+C(q,\varepsilon,u)\left|x\right|^{2}
\]
and denote by $p_{j}$ the standard mollification of $p$. Since $u_{\varepsilon,j}$
and $p_{j}$ are smooth, we calculate
\begin{align}
\int_{\Omega_{r(\varepsilon)}} & -\Big(\delta+\left|Du_{\varepsilon,j}\right|^{2}\Big)^{\frac{p_{j}(x)-2}{2}}\left(\Delta u_{\varepsilon,j}+\frac{p_{j}(x)-2}{\delta+\left|Du_{\varepsilon,j}\right|^{2}}\Delta_{\infty}u_{\varepsilon,j}\right)\varphi\d x\nonumber \\
= & \int_{\Omega_{r(\varepsilon)}}-\mathrm{div}\left(\left(\delta+\left|Du_{\varepsilon,j}\right|^{2}\right)^{\frac{p_{j}(x)-2}{2}}Du_{\varepsilon,j}\right)\varphi\nonumber \\
 & +\frac{1}{2}\left(\delta+\left|Du_{\varepsilon,j}\right|^{2}\right)^{\frac{p_{j}(x)-2}{2}}\log\left(\delta+\left|Du_{\varepsilon,j}\right|^{2}\right)Du_{\varepsilon,j}\cdot Dp_{j}\,\varphi\d x\nonumber \\
= & \int_{\Omega_{r(\varepsilon)}}\left(\delta+\left|Du_{\varepsilon,j}\right|^{2}\right)^{\frac{p_{j}(x)-2}{2}}Du_{\varepsilon,j}\cdot\left(D\varphi+\frac{1}{2}\log\left(\delta+\left|Du_{\varepsilon,j}\right|^{2}\right)Dp_{j}\,\varphi\right)\d x.\label{eq:jaux}
\end{align}
We let $j\rightarrow\infty$ in (\ref{eq:jaux}) and intend to use
Fatou's lemma at the LHS and the Dominated convergence theorem at
the RHS. This results in the auxiliary inequality
\begin{align}
\int_{\Omega_{r(\varepsilon)}} & -\Big(\delta+\left|Du_{\varepsilon}\right|^{2}\Big)^{\frac{p(x)-2}{2}}\left(\Delta u_{\varepsilon}+\frac{p(x)-2}{\delta+\left|Du_{\varepsilon}\right|^{2}}\Delta_{\infty}u_{\varepsilon}\right)\varphi\d x\nonumber \\
\leq & \int_{\Omega_{r(\varepsilon)}}\left(\delta+\left|Du_{\varepsilon}\right|^{2}\right)^{\frac{p(x)-2}{2}}Du_{\varepsilon}\cdot\left(D\varphi+\frac{1}{2}\log\left(\delta+\left|Du_{\varepsilon}\right|^{2}\right)Dp\,\varphi\right)\d x,\label{eq:aux}
\end{align}
where $D^{2}u_{\varepsilon}$ is the Hessian of $u_{\varepsilon}$
in the Alexandrov's sense. We still need to check that the assumptions
of the Dominated convergence theorem and Fatou's lemma hold. By Lipschitz
continuity of $u_{\varepsilon}$ and $p$ there is $M\geq1$ such
that
\[
\sup_{j}\left\Vert Du_{\varepsilon,j}\right\Vert _{L^{\infty}(\supp\varphi)},\sup_{j}\left\Vert Dp_{j}\right\Vert _{L^{\infty}(\supp\varphi)}\leq M.
\]
This justifies our use of the Dominated convergence theorem. In order
to justify our use of Fatou's lemma, we notice first that by concavity
of $\phi_{j}$ we have $D^{2}u_{\varepsilon,j}\leq C(q,\varepsilon,u)I$.
Thus the integrand at the LHS of (\ref{eq:jaux}) is clearly bounded
from below by a constant independent of $j$ if $Du_{\varepsilon,j}=0$.
If $Du_{\varepsilon,j}\not=0$, we have
\begin{align*}
\bigg(\delta & +\left|Du_{\varepsilon,j}\right|^{2}\bigg)^{\frac{p_{j}(x)-2}{2}}\left(\Delta u_{\varepsilon,j}+\frac{p_{j}(x)-2}{\delta+\left|Du_{\varepsilon,j}\right|^{2}}\Delta_{\infty}u_{\varepsilon,j}\right)\\
 & =\frac{\left(\delta+\left|Du_{\varepsilon,j}\right|^{2}\right)^{\frac{p_{j}(x)-2}{2}}}{\delta+\left|Du_{\varepsilon,j}\right|^{2}}\left(\left|Du_{\varepsilon,j}\right|^{2}\left(\Delta u_{\varepsilon,j}+\frac{p_{j}(x)-2}{\left|Du_{\varepsilon,j}\right|^{2}}\Delta_{\infty}u_{\varepsilon,j}\right)+\delta\Delta u_{\varepsilon,j}\right)\\
 & \leq\frac{\delta^{\frac{p_{j}(x)-2}{2}}+\left(\delta+M^{2}\right)^{\frac{p_{j}(x)-2}{2}}}{\delta+\left|Du_{\varepsilon,j}\right|^{2}}C(q,\varepsilon,u)\left(\left|Du_{\varepsilon,j}\right|^{2}\left(N+p_{j}(x)-2\right)+\delta N\right)\\
 & \leq C(q,\varepsilon,u)\left(\delta^{\frac{p^{-}-2}{2}}+\left(\delta+M^{2}\right)^{\frac{p^{+}-2}{2}}\right)\left(2N+p^{+}-2\right),
\end{align*}
where the first inequality follows like estimate (\ref{eq:ishiiq_main})
since $p_{j}\geq p^{-}>1$.

\textbf{(Step 2) }We let $\delta\rightarrow0$ in the auxiliary inequality
(\ref{eq:aux}). The RHS becomes
\[
\int_{\Omega_{r(\varepsilon)}\setminus\left\{ Du_{\varepsilon}=0\right\} }\left|Du_{\varepsilon}\right|^{p(x)-2}Du_{\varepsilon}\cdot\left(D\varphi+\log\left|Du_{\varepsilon}\right|Dp\,\varphi\right)\d x
\]
by the Lebesgue's dominated convergence theorem. We intend to apply
Fatou's lemma on the LHS. We have $\left(Du_{\varepsilon}(x),D^{2}u_{\varepsilon}(x)\right)\in J^{2,-}u_{\varepsilon}(x)$
for almost every $x\in\Omega_{r(\varepsilon)}$. Therefore by Lemma
\ref{lem:ishiiq} it holds that
\begin{equation}
\left|Du_{\varepsilon}\right|^{\min(p(x)-2,0)}F(x,Du_{\varepsilon},D^{2}u_{\varepsilon})\geq E(\varepsilon)\text{ in }\left\{ x\in\Omega_{r(\varepsilon)}:Du_{\varepsilon}\not=0\right\} \label{eq:ishiiq_ae}
\end{equation}
and by the property (v) in Lemma \ref{lem:infconv_properties} we
have 
\begin{equation}
D^{2}u_{\varepsilon}\leq\frac{q-1}{\varepsilon}\left|Du_{\varepsilon}\right|^{\frac{q-2}{q-1}}I.\label{eq:ishiiq_hessianineq}
\end{equation}
Observe that since $q>2$, the condition (\ref{eq:ishiiq_hessianineq})
implies that the Hessian $D^{2}u_{\varepsilon}$ is negative semi-definite
in the set where the gradient $Du_{\varepsilon}$ vanishes. Using
this fact, Fatou's lemma and (\ref{eq:ishiiq_ae}) we get 
\begin{align}
\liminf_{\delta\rightarrow0} & \int_{\Omega_{r(\varepsilon)}}-\left(\left|Du_{\varepsilon}\right|^{2}+\delta\right)^{\frac{p(x)-2}{2}}\left(\Delta u_{\varepsilon}+\frac{p(x)-2}{\left|Du_{\varepsilon}\right|^{2}+\delta}\Delta_{\infty}u_{\varepsilon}\right)\varphi\d x\nonumber \\
\geq & \liminf_{\delta\rightarrow0}\int_{\left\{ Du_{\varepsilon}\not=0\right\} }-\left(\left|Du_{\varepsilon}\right|^{2}+\delta\right)^{\frac{p(x)-2}{2}}\left(\Delta u_{\varepsilon}+\frac{p(x)-2}{\left|Du_{\varepsilon}\right|^{2}+\delta}\Delta_{\infty}u_{\varepsilon}\right)\varphi\d x\nonumber \\
 & +\liminf_{\delta\rightarrow0}\int_{\left\{ Du_{\varepsilon}=0\right\} }-\delta^{\frac{p(x)-2}{2}}\Delta u_{\varepsilon}\varphi\d x\nonumber \\
\geq & \int_{\left\{ Du_{\varepsilon}\not=0\right\} }-\left|Du_{\varepsilon}\right|^{p(x)-2}\left(\Delta u_{\varepsilon}+\frac{p(x)-2}{\left|Du_{\varepsilon}\right|^{2}}\Delta_{\infty}u_{\varepsilon}\right)\varphi\d x\nonumber \\
\geq & E(\varepsilon)\int_{\left\{ Du_{\varepsilon}\not=0\right\} }\left|Du_{\varepsilon}\right|^{\max(p(x)-2,0)}\varphi\d x,\label{eq:proof_lemma_liminfstuff}
\end{align}
and thus we arrive at the desired inequality. Our use of Fatou's lemma
is justified since if $Du_{\varepsilon}\not=0$ and $p(x)\leq2$,
we have by (\ref{eq:ishiiq_hessianineq})
\begin{align*}
\Big(\left|Du_{\varepsilon}\right|^{2} & +\delta\Big)^{\frac{p(x)-2}{2}}\left(\Delta u_{\varepsilon}+\frac{p(x)-2}{\left|Du_{\varepsilon}\right|^{2}+\delta}\Delta_{\infty}u_{\varepsilon}\right)\\
= & \frac{\left(\left|Du_{\varepsilon}\right|^{2}+\delta\right)}{\left|Du_{\varepsilon}\right|^{2}+\delta}^{\frac{p(x)-2}{2}}\left(\left|Du_{\varepsilon}\right|^{2}\left(\Delta u_{\varepsilon}+\frac{p(x)-2}{\left|Du_{\varepsilon}\right|^{2}}\Delta_{\infty}u_{\varepsilon}\right)+\delta\Delta u_{\varepsilon}\right)\\
\leq & \frac{\left(\left|Du_{\varepsilon}\right|^{2}+\delta\right)}{\left|Du_{\varepsilon}\right|^{2}+\delta}^{\frac{p(x)-2}{2}}\frac{q-1}{\varepsilon}\left(\left|Du_{\varepsilon}\right|^{\frac{q-2}{q-1}+2}\left(N+p(x)-2\right)+\left|Du_{\varepsilon}\right|^{\frac{q-2}{q-1}}\delta N\right)\\
\leq & \left|Du_{\varepsilon}\right|^{p(x)-2+\frac{q-2}{q-1}}\left(\frac{q-1}{\varepsilon}\right)\left(2N+p(x)-2\right)\\
\leq & \left(\left\Vert Du_{\varepsilon}\right\Vert _{L^{\infty}(\supp\varphi)}+1\right)^{p^{+}-2+\frac{q-2}{q-1}}\left(\frac{q-1}{\varepsilon}\right)\left(2N+p^{+}-2\right),
\end{align*}
where the last inequality follows from $p^{-}-2+\frac{q-2}{q-1}\geq0$.
If $Du_{\varepsilon}\not=0$ and $p(x)>2$, we have simply
\begin{align*}
\Big(\left|Du_{\varepsilon}\right|^{2} & +\delta\Big)^{\frac{p(x)-2}{2}}\left(\Delta u_{\varepsilon}+\frac{p(x)-2}{\left|Du_{\varepsilon}\right|^{2}+\delta}\Delta_{\infty}u_{\varepsilon}\right)\\
\leq & \left(\left\Vert Du_{\varepsilon}\right\Vert _{L^{\infty}(\supp\varphi)}^{2}+1\right)^{\frac{p^{+}-2}{2}+\frac{q-2}{q-1}}(\frac{q-1}{\varepsilon})\left(N+p^{+}-2\right).\qedhere
\end{align*}
\end{proof}
In the next two lemmas we use Caccioppoli type estimates and algebraic
inequalities to show that the sequence of inf-convolutions converges
to the viscosity supersolution in $W_{loc}^{1,p(\cdot)}(\Omega)$.
\begin{lem}
\label{lem:proof_lemma2}Under the assumptions of Lemma \ref{lem:proof_lemma},
the function $u$ belongs in $W_{loc}^{1,p(\cdot)}(\Omega)$ and for
any $\Omega^{\prime}\Subset\Omega$ we have $Du_{\varepsilon}\rightarrow Du$
weakly in $L^{p(\cdot)}(\Omega^{\prime})$ for some subsequence.
\end{lem}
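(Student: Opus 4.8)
The plan is to extract from Lemma~\ref{lem:proof_lemma} a Caccioppoli-type estimate that bounds $\int|Du_{\varepsilon}|^{p(x)}$ on compact subsets uniformly in $\varepsilon$, and then to use the reflexivity of the variable exponent Lebesgue space together with the distributional convergence $u_{\varepsilon}\to u$. Fix $\Omega'\Subset\Omega''\Subset\Omega$ and a cut-off $\xi\in C_{0}^{\infty}(\Omega'')$ with $0\le\xi\le1$ and $\xi\equiv1$ on $\Omega'$. Since $u$ is continuous on the bounded set $\Omega$, set $M:=\sup_{\Omega''}u<\infty$; because $u_{\varepsilon}\le u$ and $u_{\varepsilon}\to u$ uniformly on $\Omega''$ (Lemma~\ref{lem:infconv_properties}(i)), there are $\varepsilon_{0}>0$ and $C_{0}\ge1$ with $0\le M-u_{\varepsilon}\le C_{0}$ on $\Omega''$ and $\Omega''\subset\Omega_{r(\varepsilon)}$ for all $\varepsilon\le\varepsilon_{0}$. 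For such $\varepsilon$ the function $\varphi:=\xi^{p^{+}}(M-u_{\varepsilon})$ is non-negative, lies in $W^{1,p(\cdot)}(\Omega_{r(\varepsilon)})$ with compact support (recall $u_{\varepsilon}\in W_{loc}^{1,\infty}$ by Remark~\ref{rem:inf_remark}), and is therefore admissible in Lemma~\ref{lem:proof_lemma}. Using $D\varphi=-\xi^{p^{+}}Du_{\varepsilon}+p^{+}\xi^{p^{+}-1}(M-u_{\varepsilon})D\xi$ and rearranging the inequality of Lemma~\ref{lem:proof_lemma}, one gets
\[
\int_{\Omega''}\xi^{p^{+}}|Du_{\varepsilon}|^{p(x)}\d x\le R_{1}+R_{2}+R_{3},
\]
where $R_{1}:=p^{+}\int_{\Omega''}\xi^{p^{+}-1}(M-u_{\varepsilon})|Du_{\varepsilon}|^{p(x)-1}|D\xi|\d x$, $R_{2}:=\int_{\Omega''}\xi^{p^{+}}(M-u_{\varepsilon})|Du_{\varepsilon}|^{p(x)-1}\bigl|\log|Du_{\varepsilon}|\bigr|\,|Dp|\d x$ and $R_{3}:=|E(\varepsilon)|\int_{\Omega''}\xi^{p^{+}}(M-u_{\varepsilon})|Du_{\varepsilon}|^{s(x)}\d x$.

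Next I would absorb $R_{1}$, $R_{2}$, $R_{3}$ into the left-hand side. Since $s(x)=\max(p(x)-2,0)\le p(x)$ we have $|Du_{\varepsilon}|^{s(x)}\le1+|Du_{\varepsilon}|^{p(x)}$, so $R_{3}\le|E(\varepsilon)|C_{0}\bigl(|\Omega''|+\int_{\Omega''}\xi^{p^{+}}|Du_{\varepsilon}|^{p(x)}\d x\bigr)$, and choosing $\varepsilon_{0}$ so small that $|E(\varepsilon)|C_{0}\le\tfrac14$ absorbs it up to a constant. For $R_{2}$, since $1<p^{-}\le p(x)$, for a fixed small $\kappa\in(0,1)$ one has the pointwise bound $t^{p(x)-1}|\log t|\le C(p^{-},\kappa)(1+t^{p(x)-1+\kappa})$ for all $t>0$, and then Young's inequality $t^{p(x)-1+\kappa}\le\sigma t^{p(x)}+C(\sigma)$ (valid since $p(x)-1+\kappa<p(x)$), together with $|Dp|\le\|Dp\|_{L^{\infty}}$ and $M-u_{\varepsilon}\le C_{0}$, disposes of $R_{2}$. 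For $R_{1}$ one uses Young's inequality with exponents $\tfrac{p(x)}{p(x)-1}$ and $p(x)$ and a small parameter, splitting the powers of $\xi$ so that $0\le\xi\le1$ and $p^{+}\ge p(x)$ keep the coefficient of $\int\xi^{p^{+}}|Du_{\varepsilon}|^{p(x)}\d x$ small. Picking first $\sigma$ and then $\varepsilon_{0}$ small, we arrive at $\int_{\Omega''}\xi^{p^{+}}|Du_{\varepsilon}|^{p(x)}\d x\le C$ with $C$ independent of $\varepsilon\le\varepsilon_{0}$, depending only on $p$, $q$, $N$, $\|D\xi\|_{L^{\infty}}$, $C_{0}$ and $|\Omega''|$. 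In particular $\varrho_{p(\cdot)}$ of $Du_{\varepsilon}$ over $\Omega'$ is bounded uniformly in $\varepsilon$, so by (\ref{eq:modular_ineq}) the family $\{Du_{\varepsilon}\}_{\varepsilon\le\varepsilon_{0}}$ is bounded in $L^{p(\cdot)}(\Omega')$.

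Finally, since $1<p^{-}\le p^{+}<\infty$ the space $L^{p(\cdot)}(\Omega')$ is reflexive, so there are a sequence $\varepsilon_{j}\to0$ and $v\in L^{p(\cdot)}(\Omega')$ with $Du_{\varepsilon_{j}}\rightharpoonup v$ weakly in $L^{p(\cdot)}(\Omega')$. Testing this weak convergence against $\phi\in C_{0}^{\infty}(\Omega';\mathbb{R}^{N})$, using $\int_{\Omega'}Du_{\varepsilon_{j}}\cdot\phi\d x=-\int_{\Omega'}u_{\varepsilon_{j}}\,\div\phi\d x$ and $u_{\varepsilon_{j}}\to u$ uniformly on $\Omega'$, we obtain $\int_{\Omega'}v\cdot\phi\d x=-\int_{\Omega'}u\,\div\phi\d x$, so the distributional gradient of $u$ equals $v$. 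Hence $u\in W^{1,p(\cdot)}(\Omega')$ with $Du=v$, and $Du_{\varepsilon_{j}}\rightharpoonup Du$ weakly in $L^{p(\cdot)}(\Omega')$; since $\Omega'\Subset\Omega$ was arbitrary, $u\in W_{loc}^{1,p(\cdot)}(\Omega)$. The main obstacle is the Caccioppoli estimate itself: one must make sure that the logarithmic nonlinearity in $R_{2}$ and the error term $R_{3}$ are absorbed into $\int\xi^{p^{+}}|Du_{\varepsilon}|^{p(x)}\d x$ with constants independent of $\varepsilon$, which hinges on $\log$ growing more slowly than any positive power of its argument, on $s(x)\le p(x)$, and on $E(\varepsilon)\to0$.
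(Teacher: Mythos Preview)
Your proposal is correct and follows essentially the same Caccioppoli argument as the paper: test with $\xi^{p^{+}}(\text{constant}-u_{\varepsilon})$, absorb the three resulting error integrals via Young's inequality and the sublinear growth of the logarithm, and then invoke reflexivity of $L^{p(\cdot)}$. The only minor differences are cosmetic---you introduce an intermediate domain $\Omega''$, absorb $R_{3}$ using $E(\varepsilon)\to0$ rather than Young's inequality, and you spell out explicitly (which the paper does not) why the weak limit coincides with the distributional gradient of $u$.
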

\begin{proof}
Take a cut-off function $\xi\in C_{0}^{\infty}(\Omega^{\prime})$
such that $0\leq\xi\leq1$ in $\Omega$ and $\xi\equiv1$ in $\Omega^{\prime}$.
Then assume that $\varepsilon$ is so small that $\mathrm{supp}\thinspace\xi=:K\subset\Omega{}_{r(\varepsilon)}$.
We define a test function $\varphi:=(L-u_{\varepsilon})\xi^{p^{+}}$
where $L:=\sup_{\varepsilon,x\in\Omega^{\prime}}\left|u_{\varepsilon}(x)\right|$
is finite since $u_{\varepsilon}\rightarrow u$ locally uniformly.
We have
\[
D\varphi=-Du_{\varepsilon}\,\xi^{p^{+}}+(L-u_{\varepsilon})p^{\text{+}}\xi^{p^{+}-1}D\xi
\]
and therefore by Lemma \ref{lem:proof_lemma}
\begin{align*}
\int_{\Omega_{r(\varepsilon)}}\left|Du_{\varepsilon}\right|^{p(x)}\xi^{p^{+}}\d x\leq & \int_{\Omega_{r(\varepsilon)}}\left|Du_{\varepsilon}\right|^{p(x)-1}\xi^{p^{+}-1}\left(L-u_{\varepsilon}\right)p^{+}\left|D\xi\right|\d x\\
 & +\int_{\Omega_{r(\varepsilon)}}\left|Du_{\varepsilon}\right|^{p(x)-1}\left|\log\left|Du_{\varepsilon}\right|\right|\left|Dp\right|\left(L-u_{\varepsilon}\right)\xi^{p^{+}}\d x\\
 & +\left|E(\varepsilon)\right|\int_{\Omega_{r(\varepsilon)}}\left|Du_{\varepsilon}\right|^{\max(p(x)-2,0)}\left(L-u_{\varepsilon}\right)\xi^{p^{+}}\d x\\
=: & I_{1}+I_{2}+I_{3}.
\end{align*}
We estimate these integrals using Young's inequality. The first integral
is estimated by the facts $\frac{p(x)\left(p^{+}-1\right)}{p(x)-1}\geq p^{+}$
and $\xi\leq1$ as follows 
\begin{align*}
I_{1}\leq & \int_{\Omega_{r(\varepsilon)}}\delta\left|Du_{\varepsilon}\right|^{p(x)}\xi^{\frac{p(x)(p^{+}-1)}{p(x)-1}}+\left(\frac{2}{\delta}Lp^{+}\left|D\xi\right|\right)^{p(x)}\d x\\
\leq & \delta\int_{\Omega_{r(\varepsilon)}}\left|Du_{\varepsilon}\right|^{p(x)}\xi^{p^{+}}\d x+C(\delta,p,L,D\xi).
\end{align*}
To estimate $I_{2}$, we also use the inequality $a^{s}\left|\log a\right|\leq a^{s+\frac{1}{2}}+\frac{1}{s}\text{ for }a>0\text{ and }s>0$,
\begin{align*}
I_{2}\leq & \int_{\Omega_{r(\varepsilon)}}\left(\left|Du_{\varepsilon}\right|^{p(x)-\frac{1}{2}}+\frac{1}{p(x)-1}\right)\xi^{p^{+}}\left|Dp\right|2L\d x\\
\leq & \int_{\Omega_{r(\varepsilon)}}\delta\left|Du_{\varepsilon}\right|^{p(x)}\xi^{\frac{p^{+}p(x)}{p(x)-\frac{1}{2}}}+\left(\frac{2}{\delta}\left|Dp\right|L\right)^{2p(x)}+\frac{2L\left|Dp\right|\xi^{p^{+}}}{p^{-}-1}\d x\\
\leq & \delta\int_{\Omega_{r(\varepsilon)}}\left|Du_{\varepsilon}\right|^{p(x)}\xi^{p^{+}}\d x+C(\delta,p,Dp,L).
\end{align*}
The last integral is estimated by the two alternatives in $\max(p(x)-2,0)$
as follows (we may assume that $\left|E(\varepsilon)\right|\leq1$)
\begin{align*}
I_{3}\leq & \int_{\Omega_{r(\varepsilon)}\cap\left\{ p(x)>2\right\} }\left|Du_{\varepsilon}\right|^{p(x)-2}\xi^{p^{+}}2L\d x+\int_{\Omega_{r(\varepsilon)}\cap\left\{ p(x)\leq2\right\} }2L\xi^{p^{+}}\d x\\
\leq & \int_{\Omega_{r(\varepsilon)}\cap\left\{ p(x)>2\right\} }\delta\left|Du_{\varepsilon}\right|^{p(x)}\xi^{\frac{p^{+}p(x)}{p(x)-2}}+\left(\frac{2}{\delta}L\right)^{\frac{p(x)}{2}}\d x+C(p,L)\\
\leq & \delta\int_{\Omega_{r(\varepsilon)}}\left|Du_{\varepsilon}\right|^{p(x)}\xi^{p^{+}}\d x+C(\delta,p,L).
\end{align*}
Taking small $\delta$ we conclude that $Du_{\varepsilon}$ is bounded
in $L^{p(\cdot)}(\Omega^{\prime})$ with respect to $\varepsilon$.
Since $L^{p(\cdot)}(\Omega^{\prime})$ is a reflexive Banach space
\cite[p76 and p89]{pxbook}, it follows that there is a function $Du\in L^{p(\cdot)}(\Omega^{\prime})$
such that $Du_{\varepsilon}\rightarrow Du$ weakly in $L^{p(\cdot)}(\Omega^{\prime})$
for some subsequence. Consequently $u\in W^{1,p(\cdot)}(\Omega^{\prime})$
with $Du$ as its weak derivative.
\end{proof}
\begin{lem}
\label{lem:proof_lemma_convergence}Under the assumptions of Lemma
\ref{lem:proof_lemma}, for any $\Omega^{\prime}\Subset\Omega$ we
have $Du_{\varepsilon}\rightarrow Du$ in $L^{p(\cdot)}(\Omega^{\prime})$
for some subsequence.
\end{lem}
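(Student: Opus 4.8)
The plan is to upgrade the weak convergence $Du_{\varepsilon}\rightarrow Du$ in $L^{p(\cdot)}$ from Lemma \ref{lem:proof_lemma2} to strong convergence. The key is to prove that the monotonicity quantity
\[
\int_{\Omega'}\left(\left|Du_{\varepsilon}\right|^{p(x)-2}Du_{\varepsilon}-\left|Du\right|^{p(x)-2}Du\right)\cdot\left(Du_{\varepsilon}-Du\right)\d x\rightarrow0
\]
along the subsequence, and then to convert this into convergence of $\int_{\Omega'}\left|Du_{\varepsilon}-Du\right|^{p(x)}\d x$ to $0$ by the standard vector inequalities, after which (\ref{eq:modular_ineq}) gives the assertion.

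To obtain the displayed limit, fix $\Omega'\Subset\Omega''\Subset\Omega$ and a cut-off $\xi\in C_{0}^{\infty}(\Omega'')$ with $0\leq\xi\leq1$ and $\xi\equiv1$ on $\Omega'$, and pass to the subsequence supplied by Lemma \ref{lem:proof_lemma2} for the domain $\Omega''$; recall that the proof of that lemma also provides the Caccioppoli bound $\sup_{\varepsilon}\int_{\Omega''}\left|Du_{\varepsilon}\right|^{p(x)}\d x<\infty$. For $\varepsilon$ so small that $\supp\xi\Subset\Omega_{r(\varepsilon)}$, the function $\varphi:=(u-u_{\varepsilon})\xi$ is non-negative (because $u\geq u_{\varepsilon}$), belongs to $W^{1,p(\cdot)}(\Omega_{r(\varepsilon)})$ and has compact support, hence is admissible in Lemma \ref{lem:proof_lemma}. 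Inserting it, using $D\varphi=(Du-Du_{\varepsilon})\xi+(u-u_{\varepsilon})D\xi$, and isolating the leading term, we get
\[
\int\left|Du_{\varepsilon}\right|^{p(x)-2}Du_{\varepsilon}\cdot\left(Du_{\varepsilon}-Du\right)\xi\d x\leq R_{\varepsilon},
\]
where $R_{\varepsilon}$ consists of the term with $D\xi$, the term with $\log\left|Du_{\varepsilon}\right|Dp$, and the term with $E(\varepsilon)$. Each of these tends to $0$: the $D\xi$-term because $(u-u_{\varepsilon})D\xi\rightarrow0$ uniformly by Lemma \ref{lem:infconv_properties}(i) while $\left|Du_{\varepsilon}\right|^{p(x)-1}$ is bounded in $L^{p'(\cdot)}(\supp\xi)$ by the Caccioppoli bound; the $\log$-term similarly, after bounding $\left|Du_{\varepsilon}\right|^{p(x)-1}\left|\log\left|Du_{\varepsilon}\right|\right|$ by $\left|Du_{\varepsilon}\right|^{p(x)}+1+(p^{-}-1)^{-1}$ via $a^{s}\left|\log a\right|\leq a^{s+\frac{1}{2}}+\frac{1}{s}$ and using $\left|Dp\right|\leq C$; and the $E(\varepsilon)$-term because $E(\varepsilon)\rightarrow0$ while $\int\left|Du_{\varepsilon}\right|^{\max(p(x)-2,0)}(u-u_{\varepsilon})\xi\d x$ stays bounded. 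Hence $\limsup_{\varepsilon}\int\left|Du_{\varepsilon}\right|^{p(x)-2}Du_{\varepsilon}\cdot(Du_{\varepsilon}-Du)\xi\d x\leq0$. Subtracting $\int\left|Du\right|^{p(x)-2}Du\cdot(Du_{\varepsilon}-Du)\xi\d x$, which vanishes in the limit because $\left|Du\right|^{p(x)-2}Du\,\xi\in L^{p'(\cdot)}(\Omega'')$ and $Du_{\varepsilon}-Du\rightarrow0$ weakly in $L^{p(\cdot)}(\Omega'')$, and using $\xi\geq0$, $\xi\equiv1$ on $\Omega'$ together with the pointwise non-negativity of the resulting integrand, we arrive at the displayed convergence.

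To finish, split $\Omega'$ into $\{p\geq2\}$ and $\{p<2\}$. On $\{p\geq2\}$ the inequality $\left|a-b\right|^{p(x)}\leq c(p^{+})\left(\left|a\right|^{p(x)-2}a-\left|b\right|^{p(x)-2}b\right)\cdot(a-b)$ yields $\int_{\Omega'\cap\{p\geq2\}}\left|Du_{\varepsilon}-Du\right|^{p(x)}\d x\rightarrow0$. On $\{p<2\}$ the inequality $\left(\left|a\right|^{p(x)-2}a-\left|b\right|^{p(x)-2}b\right)\cdot(a-b)\geq(p^{-}-1)\left|a-b\right|^{2}(\left|a\right|+\left|b\right|)^{p(x)-2}$ yields $\int_{\Omega'\cap\{p<2\}}\left|Du_{\varepsilon}-Du\right|^{2}(\left|Du_{\varepsilon}\right|+\left|Du\right|)^{p(x)-2}\d x\rightarrow0$, and then writing
\[
\left|Du_{\varepsilon}-Du\right|^{p(x)}=\left(\frac{\left|Du_{\varepsilon}-Du\right|^{2}}{(\left|Du_{\varepsilon}\right|+\left|Du\right|)^{2-p(x)}}\right)^{\frac{p(x)}{2}}(\left|Du_{\varepsilon}\right|+\left|Du\right|)^{\frac{(2-p(x))p(x)}{2}}
\]
and applying H\"older's inequality with the variable exponents $\frac{2}{p(\cdot)}$ and $\frac{2}{2-p(\cdot)}$ gives $\int_{\Omega'\cap\{p<2\}}\left|Du_{\varepsilon}-Du\right|^{p(x)}\d x\rightarrow0$, because the first factor has $\frac{2}{p(\cdot)}$-modular equal to the integral just shown to vanish (and $\frac{2}{p(\cdot)}$ is bounded), while the second factor has $\frac{2}{2-p(\cdot)}$-modular dominated by $\int(\left|Du_{\varepsilon}\right|+\left|Du\right|)^{p(x)}\d x$. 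Adding the two pieces, $\int_{\Omega'}\left|Du_{\varepsilon}-Du\right|^{p(x)}\d x\rightarrow0$, whence $\left\Vert Du_{\varepsilon}-Du\right\Vert_{L^{p(\cdot)}(\Omega')}\rightarrow0$ by (\ref{eq:modular_ineq}). The main obstacle is the degenerate region $p(x)\nearrow2$ in the singular case, where the second vector inequality loses its ellipticity constant; the variable-exponent H\"older step is precisely what neutralizes this uniformly, and the only point of care is that the possibly unbounded conjugate exponent $\frac{2}{2-p(\cdot)}$ does no harm in (\ref{eq:modular_ineq}) since the relevant modular is finite.
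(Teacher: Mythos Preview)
Your proof is correct and follows essentially the same route as the paper: test Lemma~\ref{lem:proof_lemma} with $\varphi=(u-u_{\varepsilon})\xi$, show the remainder tends to zero via uniform convergence of $u_{\varepsilon}$ and weak convergence of $Du_{\varepsilon}$, and then convert the vanishing of the monotonicity integral into modular convergence using the standard vector inequalities together with the variable-exponent H\"older trick with conjugate pair $\frac{2}{p(\cdot)},\ \frac{2}{2-p(\cdot)}$ on $\{p<2\}$. The only cosmetic difference is that the paper uses the form $(1+|a|^{2}+|b|^{2})^{(p(x)-2)/2}$ in the singular-case inequality rather than your $(|a|+|b|)^{p(x)-2}$, which sidesteps the (harmless) $0\cdot\infty$ convention at points where both gradients vanish.
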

\begin{proof}
Take a cut-off function $\xi\in C_{0}^{\infty}(\Omega)$ such that
$\xi\equiv1$ in $\Omega^{\prime}$ and define a test function $\varphi:=(u-u_{\varepsilon})\xi$.
Then assume that $\varepsilon$ is so small that $\supp\xi=:K\subset\Omega_{r(\varepsilon)}$.
Since $\varphi\in W^{1,p(\cdot)}(\Omega_{r(\varepsilon)})$ with compact
support it follows from Lemma \ref{lem:proof_lemma} that
\begin{align}
\int_{\Omega_{r(\varepsilon)}} & \left(\left|Du\right|^{p(x)-2}Du-\left|Du_{\varepsilon}\right|^{p(x)-2}Du_{\varepsilon}\right)\cdot\left(Du-Du_{\varepsilon}\right)\xi\d x\nonumber \\
\leq & \int_{\Omega_{r(\varepsilon)}}\left|Du_{\varepsilon}\right|^{p(x)-2}Du_{\varepsilon}\cdot D\xi\,(u-u_{\varepsilon})\d x\nonumber \\
 & +\int_{\Omega_{r(\varepsilon)}}\left|Du_{\varepsilon}\right|^{p(x)-2}\log\left(\left|Du_{\varepsilon}\right|\right)Du_{\varepsilon}\cdot Dp\,(u-u_{\varepsilon})\xi\d x\nonumber \\
 & +\left|E(\varepsilon)\right|\int_{\Omega_{r(\varepsilon)}}\left|Du_{\varepsilon}\right|^{\max(p(x)-2,0)}(u-u_{\varepsilon})\xi\d x\nonumber \\
 & +\int_{\Omega_{r(\varepsilon)}}\left|Du\right|^{p(x)-2}Du\cdot\left(Du-Du_{\varepsilon}\right)\xi\d x\nonumber \\
\leq & \left\Vert u-u_{\varepsilon}\right\Vert _{L^{\infty}(K)}\int_{K}\left(C(p^{-})+\left|Du_{\varepsilon}\right|^{p(x)}\right)\left(D\xi+\left|Dp\right|+\left|E(\varepsilon)\right|\right)\d x\nonumber \\
 & +\int_{K}\left|Du\right|^{p(x)-2}Du\cdot\left(Du-Du_{\varepsilon}\right)\xi\d x.\label{eq:proof_2}
\end{align}
According to Lemma \ref{lem:proof_lemma2} we have $u_{\varepsilon}\rightarrow u$
locally uniformly and $Du_{\varepsilon}\rightarrow Du$ weakly in
$L^{p(\cdot)}(K)$ for a subsequence. Thus by passing to a subsequence
we may assume that the right hand side of (\ref{eq:proof_2}) converges
to zero. The claim now follows from the inequalities (see e.g. \cite[Chapter 12]{lindqvist_plaplace})
\begin{align*}
\big(\left|a\right|^{p(x)-2}a & -\left|b\right|^{p(x)-2}b\big)\cdot\left(a-b\right)\\
 & \geq\begin{cases}
(p(x)-1)\left|a-b\right|^{2}\left(1+\left|a\right|^{2}+\left|b\right|^{2}\right)^{\frac{p(x)-2}{2}} & p(x)<2\\
2^{2-p(x)}\left|a-b\right|^{p(x)} & p(x)\geq2
\end{cases}
\end{align*}
for $a,b\in\mathbb{R}^{N}$. Indeed, we immediately get that $\int_{\Omega^{\prime}\cap\left\{ p(x)\geq2\right\} }\left|Du-Du_{\varepsilon}\right|^{p(x)}\d x\rightarrow0$.
To deal with the set $\left\{ p(x)<2\right\} $, we first apply the
above algebraic inequality and then estimate using Hölder's inequality,
the modular inequality (\ref{eq:modular_ineq}) and the definition
of the $\left\Vert \cdot\right\Vert _{L^{p(\cdot)}}$-norm. We get
\begin{align*}
 & \int_{\Omega^{\prime}\cap\left\{ p(x)<2\right\} }\left|Du-Du_{\varepsilon}\right|^{p(x)}\d x\\
 & \ \leq\int_{\Omega^{\prime}\cap\left\{ p(x)<2\right\} }\left(\left(\left|Du\right|^{p(x)-2}Du-\left|Du_{\varepsilon}\right|^{p(x)-2}Du_{\varepsilon}\right)\cdot\left(Du-Du_{\varepsilon}\right)\right)^{\frac{p(x)}{2}}\\
 & \ \ \ \ \ \cdot\left(\frac{1}{p(x)-1}\right)^{\frac{p(x)}{2}}\left(1+\left|Du\right|^{2}+\left|Du_{\varepsilon}\right|^{2}\right)^{\frac{p(x)\left(2-p(x)\right)}{4}}\d x\\
 & \ \leq\left\Vert \left(\left(\left|Du\right|^{p(x)-2}Du-\left|Du_{\varepsilon}\right|^{p(x)-2}Du_{\varepsilon}\right)\cdot\left(Du-Du_{\varepsilon}\right)\right)^{\frac{p(x)}{2}}\right\Vert _{L^{\frac{2}{p(\cdot)}}(\Omega^{\prime}\cap\left\{ p(x)<2\right\} )}\\
 & \ \ \ \ \ \cdot\frac{2}{p^{-}-1}\left\Vert \left(1+\left|Du\right|^{2}+\left|Du_{\varepsilon}\right|^{2}\right)^{\frac{p(x)\left(2-p(x)\right)}{4}}\right\Vert _{L^{\frac{2}{2-p(\cdot)}}(\Omega^{\prime}\cap\left\{ p(x)<2\right\} )}\\
 & \ \leq\left(\int_{\Omega_{r(\varepsilon)}}\left(\left|Du\right|^{p(x)-2}Du-\left|Du_{\varepsilon}\right|^{p(x)-2}Du_{\varepsilon}\right)\cdot\left(Du-Du_{\varepsilon}\right)\xi\d x\right)^{s}\\
 & \ \ \ \ \ \cdot\frac{2}{p^{-}-1}\left(1+\int_{\Omega^{\prime}\cap\left\{ p(x)<2\right\} }\left(1+\left|Du\right|^{2}+\left|Du_{\varepsilon}\right|^{2}\right)^{\frac{p(x)}{2}}\d x\right),
\end{align*}
where $s\in\left\{ \frac{p^{+}}{2},\frac{p^{-}}{2}\right\} $. The
last integral is bounded since the sequence $Du_{\varepsilon}$ is
bounded in $L^{p(\cdot)}(\Omega^{\prime})$ by its weak convergence.
The RHS therefore converges to zero by (\ref{eq:proof_2}).
\end{proof}
Next, we use the previous convergence result to pass to the limit
in the inequality of Lemma \ref{lem:proof_lemma} and conclude that
viscosity supersolutions to $-\Delta_{p(x)}^{N}u\geq0$ are weak supersolutions
to $-\Delta_{p(x)}^{S}u\geq0$. 
\begin{thm}
\label{thm:sobolev} If $u\in C(\Omega)$ is a viscosity supersolution
to $-\Delta_{p(x)}^{N}u\geq0$ in $\Omega$, then $u$ is a weak supersolution
to $-\Delta_{p(x)}^{S}u\geq0$ in $\Omega$.
\end{thm}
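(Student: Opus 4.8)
The plan is to reduce the theorem to the work already done in Lemmas \ref{lem:proof_lemma}, \ref{lem:proof_lemma2} and \ref{lem:proof_lemma_convergence}, together with a standard localization and the passage to the limit $\varepsilon\to 0$. First I would note that being a weak supersolution is a local property: it suffices to show that for every $\Omega'\Subset\Omega$ and every non-negative $\varphi\in C_0^\infty(\Omega')$ the defining inequality holds (by Lemma \ref{lem:cinfty_testfunctions} smooth test functions suffice, and an arbitrary compactly supported test function can be handled on a slightly larger subdomain). Fix such a $\varphi$ and a subdomain $\Omega'$ with $\supp\varphi\Subset\Omega'\Subset\Omega$. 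Since $u$ is continuous on $\Omega$, on the relatively compact set $\Omega'$ it is bounded and uniformly continuous, so the inf-convolution $u_\varepsilon$ is defined there and Lemmas \ref{lem:proof_lemma}--\ref{lem:proof_lemma_convergence} apply (choosing $q>2$ large enough that $p^--2+\tfrac{q-2}{q-1}\ge 0$). For $\varepsilon$ small enough that $\supp\varphi\subset\Omega_{r(\varepsilon)}$, Lemma \ref{lem:proof_lemma} gives
\[
\int_{\Omega'}\left|Du_{\varepsilon}\right|^{p(x)-2}Du_{\varepsilon}\cdot\left(D\varphi+\log\left|Du_{\varepsilon}\right|Dp\,\varphi\right)\d x\geq E(\varepsilon)\int_{\Omega'}\left|Du_{\varepsilon}\right|^{s(x)}\varphi\d x,
\]
with $s(x)=\max(p(x)-2,0)$ and $E(\varepsilon)\to 0$.

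Next I would pass to the limit $\varepsilon\to 0$ along the subsequence furnished by Lemma \ref{lem:proof_lemma_convergence}, along which $Du_\varepsilon\to Du$ in $L^{p(\cdot)}(\Omega')$ (and hence, after a further subsequence, a.e.\ and dominated by an $L^{p(\cdot)}$ function). The right-hand side tends to $0$: the integrand $|Du_\varepsilon|^{s(x)}\varphi$ is bounded in $L^1(\Omega')$ since $s(x)\le p(x)$ and $Du_\varepsilon$ is bounded in $L^{p(\cdot)}(\Omega')$ (the estimate from Lemma \ref{lem:proof_lemma2}), so multiplying by $E(\varepsilon)\to 0$ kills it. For the left-hand side, the first term $\int |Du_\varepsilon|^{p(x)-2}Du_\varepsilon\cdot D\varphi\d x\to\int |Du|^{p(x)-2}Du\cdot D\varphi\d x$ because $a\mapsto|a|^{p(x)-2}a$ is continuous and the vector field $|Du_\varepsilon|^{p(x)-2}Du_\varepsilon$ is bounded in $L^{p'(\cdot)}(\Omega')$ (its modulus to the power $p'(x)$ is $|Du_\varepsilon|^{p(x)}$), so it converges weakly there while $D\varphi\in L^{p(\cdot)}$; alternatively one uses a.e.\ convergence plus a dominated convergence argument. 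The genuinely delicate term is $\int |Du_\varepsilon|^{p(x)-2}\log|Du_\varepsilon|\,Du_\varepsilon\cdot Dp\,\varphi\d x$, because of the logarithm. Here I would use the elementary bound $a^{s}|\log a|\le a^{s+\frac12}+\frac1s$ for $a,s>0$ (already used in Lemma \ref{lem:cinfty_testfunctions} and Lemma \ref{lem:proof_lemma2}) to dominate $|Du_\varepsilon|^{p(x)-1}|\log|Du_\varepsilon||$ by $|Du_\varepsilon|^{p(x)-\frac12}+\frac1{p(x)-1}$, which is bounded in $L^{p(\cdot)/(p(\cdot)-\frac12)}(\Omega')$ uniformly in $\varepsilon$ (using again the $L^{p(\cdot)}$-bound on $Du_\varepsilon$ together with the fact that $|\Omega'|<\infty$); combined with the a.e.\ convergence $Du_\varepsilon\to Du$ and the uniform $L^\infty$ bound on $Dp\,\varphi$, a vector-valued dominated convergence argument (or uniform integrability via Vitali) yields convergence of this term to $\int |Du|^{p(x)-2}\log|Du|\,Du\cdot Dp\,\varphi\d x$. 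Passing to the limit in the displayed inequality then gives
\[
\int_{\Omega'}\left|Du\right|^{p(x)-2}Du\cdot D\varphi+\left|Du\right|^{p(x)-2}\log\left(\left|Du\right|\right)Du\cdot Dp\,\varphi\d x\geq 0,
\]
which is exactly the weak supersolution inequality for this $\varphi$.

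I expect the main obstacle to be making rigorous the convergence of the logarithmic term, since $\log|Du_\varepsilon|$ is unbounded both as $|Du_\varepsilon|\to 0$ and as $|Du_\varepsilon|\to\infty$; the small-gradient blow-up is only logarithmic so is harmless after multiplying by $|Du_\varepsilon|^{p(x)-1}$ (which dominates it since $p^->1$), while the large-gradient growth is controlled by trading $\log$ for a small power of $|Du_\varepsilon|$ and invoking the uniform $L^{p(\cdot)}(\Omega')$ bound on the gradients, so that the family $\{|Du_\varepsilon|^{p(x)-2}\log|Du_\varepsilon|\,Du_\varepsilon\cdot Dp\,\varphi\}_\varepsilon$ is uniformly integrable and one may pass to the limit by Vitali's convergence theorem. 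A secondary point to be careful about is that Lemmas \ref{lem:proof_lemma}--\ref{lem:proof_lemma_convergence} are stated for a globally uniformly continuous $u$ on $\Omega$, whereas here $u$ is merely continuous; this is resolved exactly by the localization to $\Omega'\Subset\Omega$ described above, on which $u$ is uniformly continuous, and since weak supersolutions are characterized by a local condition (with test functions supported in such $\Omega'$), establishing the inequality for all such $\Omega'$ and all non-negative $\varphi\in C_0^\infty(\Omega')$ suffices. Finally, the subsolution statement, and hence the full equivalence in Theorem \ref{thm:equivalence}, follows by applying the supersolution result to $-u$.
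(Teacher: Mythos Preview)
Your proposal is correct and follows the same overall strategy as the paper's proof: localize so that $u$ is uniformly continuous, apply Lemma~\ref{lem:proof_lemma} to $u_\varepsilon$, and pass to the limit using the strong $L^{p(\cdot)}$ gradient convergence from Lemma~\ref{lem:proof_lemma_convergence}. The only minor tactical difference is in the treatment of the logarithmic term: the paper carries out an explicit splitting into the set $F_\varepsilon=\{|Du_\varepsilon-Du|\ge\delta\}$ (controlled via convergence in measure and the bound $a^{s}|\log a|\le a^{s+1/2}+1/s$) and its complement (controlled via uniform continuity of $(a,x)\mapsto|a|^{p(x)-2}\log|a|\,a$ on bounded sets together with the algebraic inequality~\eqref{eq:proof_elemineq}), whereas your uniform-integrability/Vitali argument reaches the same conclusion more succinctly.
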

\begin{proof}
It is clear from the definition of weak supersolutions to $-\Delta_{p(x)}^{S}u\geq0$
that we can without loss of generality assume that $u$ is uniformly
continuous in $\Omega$ by restricting to a smaller domain. Fix a
non-negative test function $\varphi\in C_{0}^{\infty}(\Omega)$ and
take open $\Omega^{\prime}\Subset\Omega$ such that $\mathrm{supp}\thinspace\varphi\subset\Omega^{\prime}$.
Let $q$ and $u_{\varepsilon}$ be as in Lemma \ref{lem:proof_lemma}
and assume that $\varepsilon$ is so small that $\Omega^{\prime}\subset\Omega_{r(\varepsilon)}$.
Then the claim follows from Lemma \ref{lem:proof_lemma} if we show
that
\begin{equation}
\lim_{\varepsilon\rightarrow0}\int_{\Omega^{\prime}}\left|Du_{\varepsilon}\right|^{p(x)-2}Du_{\varepsilon}\cdot D\varphi\d x=\int_{\Omega^{\prime}}\left|Du\right|^{p(x)-2}Du\cdot D\varphi\d x\label{eq:ptermConv}
\end{equation}
and
\begin{align}
\lim_{\varepsilon\rightarrow0} & \int_{\Omega^{\prime}}\left|Du_{\varepsilon}\right|^{p(x)-2}\log\left(\left|Du_{\varepsilon}\right|\right)Du_{\varepsilon}\cdot Dp\,\varphi\d x\nonumber \\
= & \int_{\Omega^{\prime}}\left|Du\right|^{p(x)-2}\log\left(\left|Du\right|\right)Du\cdot Dp\,\varphi\d x\label{eq:logtermConv}
\end{align}
as well as
\begin{equation}
\lim_{\varepsilon\rightarrow0}E(\varepsilon)\int_{\Omega^{\prime}}\left|Du_{\varepsilon}\right|^{\max(p(x)-2,0)}\varphi\d x=0.\label{eq:errortermConv}
\end{equation}
By Lemma \ref{lem:proof_lemma_convergence} we have that $u_{\varepsilon}\rightarrow u$
in $W^{1,p(\cdot)}(\Omega^{\prime})$.\textbf{}\\
\textbf{Claim} (\ref{eq:ptermConv}) follows from the inequalities
(see e.g. \cite[Chapter 12]{lindqvist_plaplace})
\begin{equation}
\left|\left|a\right|^{p(x)-2}a-\left|b\right|^{p(x)-2}b\right|\leq\begin{cases}
2^{2-p(x)}\left|a-b\right|^{p(x)-1} & p(x)<2\\
2^{-1}\left(\left|a\right|^{p(x)-2}+\left|b\right|^{p(x)-2}\right)\left|a-b\right| & p(x)\geq2
\end{cases}\label{eq:proof_elemineq}
\end{equation}
for $a,b\in\mathbb{R}^{N}$. Indeed, when $\varepsilon$ is so small
that $\int_{\Omega^{\prime}}\left|Du_{\varepsilon}-Du\right|^{p(x)}\d x<1$
we have by Hölder's inequality and the modular inequality
\begin{align*}
\int_{\Omega^{\prime}} & \left|\left|Du_{\varepsilon}\right|^{p(x)-2}Du_{\varepsilon}-\left|Du\right|^{p(x)-2}Du\right|\d x\\
\leq & 2\int_{\Omega^{\prime}\cap\left\{ p(x)<2\right\} }\left|Du_{\varepsilon}-Du\right|^{p(x)-1}\d x\\
 & +2^{-1}\int_{\Omega^{\prime}\cap\left\{ p(x)\geq2\right\} }\left(\left|Du_{\varepsilon}\right|^{p(x)-2}+\left|Du\right|^{p(x)-2}\right)\left|Du_{\varepsilon}-Du\right|\d x\\
\leq & C(p,\Omega)\left(\int_{\Omega^{\prime}}\left|Du_{\varepsilon}-Du\right|^{p(x)}\d x\right)^{\frac{1}{p^{+}}}\\
 & +C(p,\Omega)\left(1+\int_{\Omega^{\prime}}\left|Du_{\varepsilon}\right|^{p(x)}+\left|Du\right|^{p(x)}\d x\right)\left\Vert Du_{\varepsilon}-Du\right\Vert _{L^{p(\cdot)}(\Omega^{\prime})}.
\end{align*}
\textbf{Claim} (\ref{eq:errortermConv}) holds since $\int_{\Omega^{\prime}}\left|Du_{\varepsilon}\right|^{p(x)}\d x$
is bounded and $E(\varepsilon)\rightarrow0$.\\
\textbf{Claim} (\ref{eq:logtermConv}) follows if we show that
\begin{equation}
\lim_{\varepsilon\rightarrow0}\int_{\Omega^{\prime}}\left|\left|Du_{\varepsilon}\right|^{p(x)-2}\log\left(\left|Du_{\varepsilon}\right|\right)Du_{\varepsilon}-\left|Du\right|^{p(x)-2}\log\left(\left|Du\right|\right)Du\right|\d x=0.\label{eq:proof_Gconv}
\end{equation}
 To this end, fix $0<\epsilon<1$. The mapping $(a,x)\mapsto\left|a\right|^{p(x)-2}\log\left(\left|a\right|\right)a$
is uniformly continuous in bounded sets of $\mathbb{R}^{N}\times\Omega^{\prime}$.
Hence there exists $\delta=\delta(\epsilon)<\epsilon$ such that whenever
$x\in\Omega^{\prime}$ and $a,b\in\overline{B}(0,3)$ satisfy $\left|a-b\right|<\delta$,
it holds
\begin{equation}
\left|\left|a\right|^{p(x)-2}\log\left(\left|a\right|\right)a-\left|b\right|^{p(x)-2}\log\left(\left|b\right|\right)b\right|\leq\epsilon.\label{eq:proof_uniform}
\end{equation}
If $\left|a\right|,\left|b\right|\geq1$ and $\left|a-b\right|<\delta$,
then we use (\ref{eq:proof_elemineq}) to get the estimate
\begin{align}
\Big|\left|a\right| & ^{p(x)-2}\log\left(\left|a\right|\right)a-\left|b\right|^{p(x)-2}\log\left(\left|b\right|\right)b\Big|\nonumber \\
\leq & \left|b\right|^{p(x)-1}\left|\log\left|a\right|-\log\left|b\right|\right|+\left|\log\left|a\right|\right|\left|\left|a\right|^{p(x)-2}a-\left|b\right|^{p(x)-2}b\right|\nonumber \\
\leq & \left|b\right|^{p(x)}\left|a-b\right|+\left|a\right|\cdot\begin{cases}
2^{2-p(x)}\left|a-b\right|^{p(x)-1}, & p(x)<2\\
2^{-1}\left(\left|a\right|^{p(x)-2}+\left|b\right|^{p(x)-2}\right)\left|a-b\right|, & p(x)\geq2
\end{cases}\nonumber \\
\leq & (1+2^{-1})\left(\left|a\right|^{p(x)}+\left|b\right|^{p(x)}\right)\left|a-b\right|+2\left|a\right|\left|a-b\right|^{p(x)-1}\nonumber \\
\leq & C\left(\left|a\right|^{p(x)}+\left|b\right|^{p(x)}\right)\epsilon^{\min(p^{-}-1,1)}.\label{eq:proof_elem2}
\end{align}
We denote 
\begin{align*}
F_{\varepsilon}= & \left\{ x\in\Omega^{\prime}:\left|Du_{\varepsilon}(x)-Du(x)\right|\geq\delta\right\} .
\end{align*}
The strong convergence of $Du_{\varepsilon}$ to $Du$ in $L^{p(\cdot)}(\Omega^{\prime})$
implies that $Du_{\varepsilon}\rightarrow Du$ in measure in $\Omega^{\prime}$
(see \cite[Lemma 3.2.10]{pxbook}). Thus there is $\varepsilon_{0}=\varepsilon_{0}(\delta)$
such that for all $\varepsilon<\varepsilon_{0}$ it holds $\left|F_{\varepsilon}\right|\leq\delta$.
Using the inequality $a^{s}\left|\log a\right|\leq a^{s+\frac{1}{2}}+\frac{1}{s}\text{ for }a>0\text{ and }s>0$
we get for all $\varepsilon<\varepsilon_{0}$
\begin{align}
\int_{F_{\varepsilon}} & \left|\left|Du_{\varepsilon}\right|^{p(x)-2}\log\left(\left|Du_{\varepsilon}\right|\right)Du_{\varepsilon}-\left|Du\right|^{p(x)-2}\log\left(\left|Du\right|\right)Du\right|\d x\nonumber \\
\leq & \int_{F_{\varepsilon}}\frac{2}{p(x)-1}+\left|Du_{\varepsilon}\right|^{p(x)-\frac{1}{2}}+\left|Du\right|^{p(x)-\frac{1}{2}}\d x\nonumber \\
\leq & C(p^{-})\left|F_{\varepsilon}\right|+\left\Vert 1\right\Vert _{L^{2p(\cdot)}(F_{\varepsilon})}\left(\left\Vert Du_{\varepsilon}\right\Vert _{L^{\frac{p(\cdot)}{p(\cdot)-\frac{1}{2}}}(F_{\varepsilon})}+\left\Vert Du\right\Vert _{L^{\frac{p(\cdot)}{p(\cdot)-\frac{1}{2}}}(F_{\varepsilon})}\right)\nonumber \\
\leq & C(p^{-})\left|F_{\varepsilon}\right|+\left|F_{\varepsilon}\right|^{\frac{1}{2p^{+}}}\left(2+\int_{F_{\varepsilon}}\left|Du_{\varepsilon}\right|^{p(x)}+\left|Du\right|^{p(x)}\d x\right)\nonumber \\
\leq & C(p^{-})\left(1+\int_{\Omega^{\prime}}\left|Du_{\varepsilon}\right|^{p(x)}+\left|Du\right|^{p(x)}\d x\right)\epsilon^{\frac{1}{2p^{+}}}.\label{eq:proof_F}
\end{align}
If $x\in\Omega^{\prime}\setminus F_{\varepsilon}$, then either $\left|Du_{\varepsilon}\right|,\left|Du\right|\leq3$
or $\left|Du_{\varepsilon}\right|,\left|Du\right|\geq1$. Hence by
(\ref{eq:proof_uniform}) and (\ref{eq:proof_elem2}) we have 
\begin{align}
\int_{\Omega^{\prime}\setminus F_{\varepsilon}} & \left|\left|Du_{\varepsilon}\right|^{p(x)-2}\log\left(\left|Du_{\varepsilon}\right|\right)Du_{\varepsilon}-\left|Du\right|^{p(x)-2}\log\left(\left|Du\right|\right)Du\right|\d x\nonumber \\
\leq & C\left(\int_{\Omega^{\prime}}\left|Du_{\varepsilon}\right|^{p(x)}+\left|Du\right|^{p(x)}+1\d x\right)\epsilon^{\min(p^{-}-1,1)}.\label{eq:proofAB}
\end{align}
 Combining (\ref{eq:proofAB}) and (\ref{eq:proof_F}) proves (\ref{eq:proof_Gconv})
since $\epsilon$ was arbitrary.
\end{proof}
Merging Theorems \ref{thm:visc} and \ref{thm:sobolev} yields the
following equivalence result.
\begin{thm}
\label{thm:equivalence}A function $u$ is a viscosity solution to
$-\Delta_{p(x)}^{N}u=0$ in $\Omega$ if and only if it is a weak
solution to $-\Delta_{p(x)}^{S}u=0$ in $\Omega$.
\end{thm}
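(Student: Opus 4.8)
The plan is to obtain Theorem \ref{thm:equivalence} as the conjunction of the two one-sided implications established in Sections 4 and 5, so that the only thing left to record is how Definition \ref{def:viscositysuper} and the definition of a weak solution behave under the sign change $u\mapsto-u$.

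First I would dispatch the implication ``weak solution $\Rightarrow$ viscosity solution''. If $u$ is a weak solution to $-\Delta_{p(x)}^{S}u=0$ in $\Omega$, then by definition $u\in W_{loc}^{1,p(\cdot)}(\Omega)$ and $u$ is simultaneously a weak super- and subsolution; Theorem \ref{thm:visc} then applies directly and yields that $u$ is a viscosity solution to $-\Delta_{p(x)}^{N}u=0$ in $\Omega$. (Recall that Theorem \ref{thm:visc} leans on the $C^{1}$-regularity of \cite{strongpx_regularity} to run its localized comparison argument, so nothing further is needed for this direction.)

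For the converse I would argue as follows. Suppose $u$ is a viscosity solution to $-\Delta_{p(x)}^{N}u=0$ in $\Omega$. By Definition \ref{def:viscositysuper} this means that $u$ is a viscosity supersolution to $-\Delta_{p(x)}^{N}u\geq0$, hence lower semicontinuous, and a viscosity subsolution to $-\Delta_{p(x)}^{N}u\leq0$, i.e. $-u$ is a viscosity supersolution to $-\Delta_{p(x)}^{N}(-u)\geq0$, hence $-u$ is lower semicontinuous and $u$ is upper semicontinuous. Consequently $u\in C(\Omega)$, and Theorem \ref{thm:sobolev} is applicable both to $u$ and to $-u$. Applying it to $u$ shows that $u$ is a weak supersolution to $-\Delta_{p(x)}^{S}u\geq0$ in $\Omega$; applying it to $-u$ shows that $-u$ is a weak supersolution, which is by definition the statement that $u$ is a weak subsolution to $-\Delta_{p(x)}^{S}u\leq0$ in $\Omega$. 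Since $u\in W_{loc}^{1,p(\cdot)}(\Omega)$ is part of the conclusion of Theorem \ref{thm:sobolev} (established through Lemma \ref{lem:proof_lemma2}), and $u$ is both a weak super- and subsolution, $u$ is a weak solution to $-\Delta_{p(x)}^{S}u=0$ in $\Omega$, as desired.

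I do not expect any genuine obstacle here: all of the analytic content is carried by Theorems \ref{thm:visc} and \ref{thm:sobolev}. The only points to verify are bookkeeping ones — that a two-sided viscosity solution is automatically continuous, so that the inf-convolution construction of Section 5 may be invoked, and that passing to $-u$ turns the subsolution assertions into the supersolution assertions treated by Theorem \ref{thm:sobolev}, which is precisely how ``subsolution'' is defined for both notions. Hence the proof reduces to a short deduction from the results of the preceding sections.
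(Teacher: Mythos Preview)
Your proposal is correct and follows exactly the paper's approach: the paper simply states that the theorem is obtained by ``merging Theorems \ref{thm:visc} and \ref{thm:sobolev}'', and you have spelled out the bookkeeping (continuity from two-sided semicontinuity, the $u\mapsto -u$ reduction for subsolutions) that the paper leaves implicit.
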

Since the weak solutions to the strong $p(x)$-Laplace equation are
locally $C^{1,\alpha}$ continuous \cite{strongpx_regularity}, our
equivalence result yields local $C^{1,\alpha}$ regularity also for
viscosity solutions of the normalized $p(x)$-Laplace equation.
\begin{cor}
If $u$ is a viscosity solution to $-\Delta_{p(x)}^{N}u=0$ in a bounded
domain $\Omega$, then $u\in C^{1,\alpha}(\Omega)$ with $\alpha\in(0,1)$.
\end{cor}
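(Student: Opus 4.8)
The plan is to deduce the corollary as a direct consequence of the equivalence already obtained in Theorem \ref{thm:equivalence} together with the known interior regularity theory for the strong $p(x)$-Laplacian. The first step is to note that a viscosity solution $u$ to $-\Delta_{p(x)}^{N}u=0$ in $\Omega$ is automatically continuous: as a viscosity supersolution it is lower semicontinuous and as a viscosity subsolution it is upper semicontinuous, so $u\in C(\Omega)$. In particular the hypotheses of Theorem \ref{thm:sobolev} (and of its subsolution counterpart) are satisfied, so there is no preliminary regularity to establish before the equivalence machinery can be applied.

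The second step is to invoke Theorem \ref{thm:equivalence}: since $u$ is a viscosity solution to $-\Delta_{p(x)}^{N}u=0$ in $\Omega$, it is a weak solution to the strong $p(x)$-Laplace equation $-\Delta_{p(x)}^{S}u=0$ in $\Omega$. By the definition of weak solution (see also Lemma \ref{lem:proof_lemma2}) this already gives $u\in W_{loc}^{1,p(\cdot)}(\Omega)$. The third step is to apply the regularity result of Zhang and Zhou \cite{strongpx_regularity}: under the standing assumptions that $p$ is Lipschitz with $p^{-}>1$ (and, since $\Omega$ is bounded, $p^{+}<\infty$), weak solutions of $-\Delta_{p(x)}^{S}u=0$ are locally $C^{1,\alpha}$ in $\Omega$ for some $\alpha\in(0,1)$. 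Combining these observations yields $u\in C^{1,\alpha}_{loc}(\Omega)$, which is exactly the asserted conclusion (the statement $u\in C^{1,\alpha}(\Omega)$ being understood in the interior sense, as no boundary regularity of $\partial\Omega$ is assumed).

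There is no substantive obstacle in this argument; the corollary is a formal corollary of the main theorem and an external regularity input. The only points requiring minor care are verifying that the hypotheses needed in \cite{strongpx_regularity} coincide with those in force here, and keeping track of the fact that the conclusion, like the cited regularity statement, is local in $\Omega$.
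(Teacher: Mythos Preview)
Your proposal is correct and follows precisely the same reasoning as the paper: the corollary is stated without a formal proof there, being explained in the preceding sentence as an immediate consequence of Theorem~\ref{thm:equivalence} together with the local $C^{1,\alpha}$ regularity of weak solutions from \cite{strongpx_regularity}. Your additional remarks about continuity of $u$ and the local nature of the conclusion are accurate and harmless elaborations.
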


\section{An Application: A Radó-type removability theorem}

The classical theorem of Radó says that if a continuous complex-valued
function $f$ defined on a domain $\Omega\subset\mathbb{C}$ is holomorphic
in $\Omega\setminus\left\{ f=0\right\} $, then it is holomorphic
in the whole $\Omega$. Similar results have been proven for solutions
of partial differential equations. We prove a Radó-type removability
theorem for the strong $p(x)$-Laplace equation. It is worth pointing
out that it could be difficult to show this kind of result without
appealing to viscosity solutions whereas it is straightforward to
do so with the help of the equivalence result. The theorem follows
by observing that weak solutions to $\Delta_{p(x)}^{S}u=0$ coincide
with viscosity solutions of an equation that satisfies the assumptions
of a Radó-type removability theorem in \cite{quasiremove}.

Recall that we ignore the test functions whose gradient vanishes at
the point of touching in the Definition \ref{def:viscositysuper}
of viscosity solutions to $-\Delta_{p(x)}^{N}u=0$. Sometimes this
kind of solutions are called \textit{feeble viscosity solutions} (e.g.
\cite{quasiremove,convex_functionals}). We will observe that these
feeble viscosity solutions to $-\Delta_{p(x)}^{N}u=0$ are exactly
the usual viscosity solutions to
\begin{equation}
-\tr(A(x,Du)D^{2}u)=0,\label{eq:ordinary_eq}
\end{equation}
where $A(x,Du):=\left|Du\right|^{2}I+\left(p(x)-2\right)Du\otimes Du$.
To be precise, we define the viscosity solutions to (\ref{eq:ordinary_eq}).
\begin{defn}
A lower semicontinuous function $u$ is a \textit{viscosity supersolution}
to (\ref{eq:ordinary_eq}) in $\Omega$ if, whenever $(\eta,X)\in J^{2,-}u(x)$
with $x\in\Omega$, then 
\[
-\tr(A(x,\eta)X)\geq0.
\]
A function $u$ is a \textit{viscosity subsolution} to (\ref{eq:ordinary_eq})
if $-u$ is a supersolution, and a \textit{viscosity solution} if
it is both viscosity super- and subsolution.
\end{defn}
\begin{lem}
\label{lem:ordinary_lemma}A function $u$ is a viscosity solution
to $-\Delta_{p(x)}^{N}u=0$ if and only if it is a viscosity solution
to (\ref{eq:ordinary_eq}).
\end{lem}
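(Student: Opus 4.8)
The plan is to reduce the equivalence to a pointwise algebraic identity and then check that the two definitions impose exactly the same inequalities on the same jets. The key observation is that for every $x\in\Omega$, every $\eta\in\mathbb{R}^{N}\setminus\{0\}$ and every $X\in S^{N}$ we have
\[
\tr(A(x,\eta)X)=\left|\eta\right|^{2}\tr X+(p(x)-2)\left\langle X\eta,\eta\right\rangle=\left|\eta\right|^{2}\left(\tr X+\frac{p(x)-2}{\left|\eta\right|^{2}}\left\langle X\eta,\eta\right\rangle\right)=-\left|\eta\right|^{2}F(x,\eta,X),
\]
so that for $\eta\neq0$ the inequality $-\tr(A(x,\eta)X)\geq0$ holds if and only if $F(x,\eta,X)\geq0$. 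On the other hand, when $\eta=0$ we have $A(x,0)=0$, hence $-\tr(A(x,0)X)=0\geq0$ for every $X\in S^{N}$, while Definition \ref{def:viscositysuper} imposes no condition whatsoever at jets with vanishing gradient (as stressed in the remark following it). Thus the two notions of supersolution test precisely the same family of inequalities.

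Concretely, I would argue as follows. Suppose first that $u$ is a viscosity supersolution to $-\Delta_{p(x)}^{N}u\geq0$ and let $(\eta,X)\in J^{2,-}u(x)$. If $\eta\neq0$, then $F(x,\eta,X)\geq0$ by definition, so $-\tr(A(x,\eta)X)=\left|\eta\right|^{2}F(x,\eta,X)\geq0$; if $\eta=0$, then $-\tr(A(x,0)X)=0\geq0$ trivially. Hence $u$ is a viscosity supersolution to \eqref{eq:ordinary_eq}. Conversely, if $u$ is a viscosity supersolution to \eqref{eq:ordinary_eq} and $(\eta,X)\in J^{2,-}u(x)$ with $\eta\neq0$, then $0\leq-\tr(A(x,\eta)X)=\left|\eta\right|^{2}F(x,\eta,X)$, and dividing by $\left|\eta\right|^{2}>0$ yields $F(x,\eta,X)\geq0$; so $u$ is a viscosity supersolution to $-\Delta_{p(x)}^{N}u\geq0$. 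The statement for subsolutions follows by applying the supersolution equivalence to $-u$, and combining the two cases gives the claimed equivalence for solutions.

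There is no real obstacle here; the only point that needs care is the behaviour at test points where the gradient vanishes. The definition of viscosity solution to $-\Delta_{p(x)}^{N}u=0$ deliberately discards such jets, whereas for \eqref{eq:ordinary_eq} they contribute only the vacuous constraint $0\geq0$ because $A(x,0)=0$. It is exactly this matching that makes \eqref{eq:ordinary_eq} a genuine (non-singular in the gradient) equation to which the Rad\'o-type removability result of \cite{quasiremove} applies, which is the reason for recording this lemma.
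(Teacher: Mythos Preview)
Your proof is correct and follows essentially the same approach as the paper: both reduce the claim to the pointwise identity $-\tr(A(x,\eta)X)=\left|\eta\right|^{2}F(x,\eta,X)$ for $\eta\neq0$, and both dispose of the case $\eta=0$ by noting that $A(x,0)=0$ while Definition~\ref{def:viscositysuper} imposes no requirement there. Your write-up is slightly more explicit in separating the two directions and in spelling out the passage to subsolutions, but the argument is the same.
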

\begin{proof}
It is enough to consider supersolutions. Take $(\eta,X)\in J^{2,-}u(x)$
with $x\in\Omega$. If $\eta=0$, then the conditions for both definitions
are satisfied, so we may assume that $\eta\not=0$. Then we have
\[
F(x,\eta,X)\geq0
\]
 if and only if
\[
-\big(\left|\eta\right|^{2}\tr(X)+\left(p(x)-2\right)\left\langle X\eta,\eta\right\rangle \big)\geq0,
\]
where 
\begin{align*}
\left|\eta\right|^{2}\tr(X)+\left(p(x)-2\right)\left\langle X\eta,\eta\right\rangle = & \left|\eta\right|^{2}\tr(X)+\left(p(x)-2\right)\tr(\eta\otimes\eta X)\\
= & \tr\big(\big(\left|\eta\right|^{2}I+\left(p(x)-2\right)\eta\otimes\eta\big)X\big).
\end{align*}
Hence the definitions are equivalent.
\end{proof}
\begin{thm}[A Radó-type removability theorem]
 Let $u\in C^{1}(\Omega)$ be a weak solution to $-\Delta_{p(x)}^{S}u=0$
in $\Omega\setminus\left\{ u=0\right\} $. Then $u$ is a weak solution
to $-\Delta_{p(x)}^{S}u=0$ in the whole $\Omega$.
\end{thm}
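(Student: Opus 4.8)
The plan is to transfer the statement into the viscosity setting, where removability of the zero set is already known, and then to translate back through the equivalence results of this paper. First I would set $U:=\Omega\setminus\{u=0\}$, which is open and bounded since $u$ is continuous and $\Omega$ is; as $u\in C^1(\Omega)$ we also have $u\in W^{1,p(\cdot)}_{loc}(U)$. Applying Theorem~\ref{thm:equivalence} with $U$ in place of $\Omega$ shows that $u$ is a viscosity solution to $-\Delta_{p(x)}^N u=0$ in $U$, and Lemma~\ref{lem:ordinary_lemma} then gives that $u$ is a viscosity solution to (\ref{eq:ordinary_eq}) in $U$.

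Next I would invoke the Rad\'o-type removability theorem of \cite{quasiremove} for equations of the form $-\tr(A(x,Du)D^2u)=0$. Here the relevant operator is
\[
G(x,\eta,X):=-\tr(A(x,\eta)X),\qquad A(x,\eta)=|\eta|^{2}I+(p(x)-2)\,\eta\otimes\eta,
\]
and one has to verify that it meets the structural hypotheses of that theorem. The key points are that $G$ is continuous in $(x,\eta,X)$ because $p$ is Lipschitz; that $G$ is degenerate elliptic, since the eigenvalues of $A(x,\eta)$ are $|\eta|^{2}$ (with multiplicity $N-1$) and $(p(x)-1)|\eta|^{2}$, all nonnegative as $p^{-}>1$; and, crucially, that $G(x,0,X)=0$ for every $x$ and every $X$, which is exactly the feature that makes the zero set removable. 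Since $u\in C^1(\Omega)$ and $u$ solves (\ref{eq:ordinary_eq}) in the viscosity sense in $U$, the cited theorem yields that $u$ is a viscosity solution to (\ref{eq:ordinary_eq}) in all of $\Omega$. Running the two translations in reverse then finishes the argument: by Lemma~\ref{lem:ordinary_lemma} the function $u$ is a viscosity solution to $-\Delta_{p(x)}^N u=0$ in $\Omega$, and hence, by Theorem~\ref{thm:equivalence}, a weak solution to $-\Delta_{p(x)}^S u=0$ in $\Omega$.

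The step I expect to be the main obstacle is matching $G$ to the precise hypotheses under which \cite{quasiremove} proves removability: the ellipticity and the identity $G(x,0,X)\equiv0$ are immediate, but one must also check that the Lipschitz $x$-dependence (through $p$) and the required regularity of $u$---here $C^1$, which in any case is guaranteed by Theorem~\ref{thm:equivalence} together with \cite{strongpx_regularity}---are what that result needs. The role of the $C^1$ hypothesis is to control $u$ near the boundary of its zero set, which is the only delicate region: in the interior of $\{u=0\}$ one has $u\equiv0$, $Du=0$ and $G(x,0,\cdot)\equiv0$, so the viscosity conditions there are vacuous, while on $U$ the equation is already known. A more self-contained alternative would be to verify the feeble viscosity conditions by hand at each point $x_0\in\partial\{u=0\}$---when a touching test function has vanishing gradient there is nothing to prove, and otherwise one tries to slide the test function to a nearby point of $U$---but carrying this through cleanly is precisely what is done in \cite{quasiremove}, so citing it is the economical choice.
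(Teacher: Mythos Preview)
Your proposal is correct and follows essentially the same route as the paper: translate to viscosity solutions of \eqref{eq:ordinary_eq} via Theorem~\ref{thm:equivalence} and Lemma~\ref{lem:ordinary_lemma}, apply the Rad\'o-type removability theorem from \cite{quasiremove} after checking that $A(x,\eta)$ is symmetric, continuous, positive semi-definite (you compute the eigenvalues, the paper estimates $\xi'A\xi$ directly), and vanishes at $\eta=0$, and then translate back. Your additional remarks about the role of the $C^1$ hypothesis and a possible self-contained argument are sound but not needed beyond what the paper does.
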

\begin{proof}
By Lemma \ref{lem:ordinary_lemma} and our equivalence result weak
solutions to $-\Delta_{p(x)}^{S}u=0$ coincide with viscosity solutions
to (\ref{eq:ordinary_eq}). Therefore it suffices to show that if
$u$ is a viscosity solution to (\ref{eq:ordinary_eq}) in $\Omega\setminus\left\{ u=0\right\} $,
it is a viscosity solution to (\ref{eq:ordinary_eq}) in the whole
$\Omega$. This on the other hand follows from \cite[Theorem 2.2]{quasiremove}.
The matrix $A$ satisfies the assumptions of the theorem as it is
symmetric, has continuous entries and $A(x,0,0)=0$ for all $x\in\Omega$.
It is also positive semi-definite since for all $\xi\in\mathbb{R}^{N}$
we have
\begin{align*}
\xi^{\prime}\left(\left|\eta\right|^{2}I+\left(p(x)-2\right)\eta\otimes\eta\right)\xi\geq & \xi^{\prime}\left(\left|\eta\right|^{2}I-\eta\otimes\eta\right)\xi\\
\geq & \left|\xi\right|^{2}\left(\left|\eta\right|^{2}-\left\Vert \eta\otimes\eta\right\Vert \right)=0.\qedhere
\end{align*}
\end{proof}
\bibliographystyle{alpha}
\bibliography{equivalence}

\Addresses
\end{document}